\newtheorem{theorem}{Theorem}[section]
\newtheorem{cor}[theorem]{Corollary}
\newtheorem{lem}[theorem]{Lemma}
\newtheorem{prop}[theorem]{Proposition}
\theoremstyle{definition}
\newtheorem{example}[theorem]{Example}
\newtheorem{defi}[theorem]{Definition}
\newtheorem{rem}[theorem]{Remark}
\numberwithin{equation}{section}
\DeclareMathOperator{\SSet}{SSet}
\DeclareMathOperator{\DGA}{DGA}
\DeclareMathOperator{\SGp}{SGp}
\DeclareMathOperator{\Hom}{Hom}
\DeclareMathOperator{\Ker}{Ker}
\DeclareMathOperator*{\Le}{\mathbb L}
\DeclareMathOperator*{\RHom}{\operatorname{RHom}}
\DeclareMathOperator*{\REnd}{\operatorname{REnd}}
\DeclareMathOperator*{\Tor}{\operatorname{Tor}}
\DeclareMathOperator*{\Pe}{\operatorname{P}}
\DeclareMathOperator*{\Perf}{\operatorname{Perf}}
\newcommand{\noproof}{\begin{flushright} \ensuremath{\square}
\end{flushright}}
\def\ground{\mathbf{k}}
\def\id{\operatorname{id}}
\newcommand{\BL}[1]{#1^\wedge}
\thanks{}
\begin{document}
\begin{abstract}
We show that the notions of homotopy epimorphism and homological epimorphism in the category of differential graded algebras are equivalent.
As an application we obtain a characterization of acyclic maps of topological spaces in terms of induced maps of their chain algebras of based loop spaces. In the case of a universal acyclic map we obtain, for a wide class of spaces, an explicit algebraic description  for these induced maps in terms of derived localization.
\end{abstract}
\title[Homological epimorphisms, homotopy epimorphisms and acyclic maps]{Homological epimorphisms, homotopy epimorphisms and acyclic maps}
\author{Joe Chuang, Andrey Lazarev}
\thanks{This work was partially supported by EPSRC grants EP/N015452/1 and EP/N016505/1}

\thanks{}

\maketitle
\tableofcontents
\section{Introduction}
The notion of \emph{epimorphism} exists in any category $\mathcal C$: a morphism $X\to Y$ is an epimorphism
if for any object $Z$ of $\mathcal C$ the induced map of sets $\Hom_{\mathcal C}(Y,Z)\to\Hom_{\mathcal C}(X,Z)$ is injective. Assuming the existence of pushouts in $\mathcal C$, this is equivalent to requiring that the codiagonal map $Y*_XY\to Y$ is an isomorphism cf. for example \cite[Proposition 2.1]{Mur} where this and other easy equivalent reformulations are given. Epimorphisms of sets or groups are just surjections, however already for rings the situation is more interesting e.g. the inclusion $\mathbb Z\to \mathbb Q$ (or any localization of rings) is an epimorphism.
The tensor product $B\otimes_A B$ realises the pushout $B*_A B$ for commutative rings. On the other hand, in the category of (not necessarily commutative) rings, $B\otimes_A B$ is not a pushout, as it is not even a ring in general. However it is still true that $A\to B$ is an epimorphism if and only if the multiplication map $B\otimes_A B\to B$ is an isomorphism \cite[Proposition 4.1.1]{Coh}.

If $\mathcal C$ is a category with an additional structure allowing one to do homotopy theory in it (such as the category of topological spaces or simplicial sets or, more generally, a closed model category) there is a similar notion of a \emph{homotopy epimorphism}: it is a map $X\to Y$ such that $Y*^{\Le}_XY\to Y$ is an isomorphism in the \emph{homotopy category} of $\mathcal C$ where $Y*^{\Le}_XY$ stands for a \emph{homotopy pushout}. It is known \cite{Rap} that homotopy epimorphisms of connected topological spaces  are precisely \emph{acyclic maps}, i.e. maps whose homotopy fibres have zero reduced integral homology groups.

We investigate the notion of homotopy epimorphism in the category of differential graded (dg) algebras, possibly noncommutative. It is known \cite[Proposition 3.17]{BCL} that \emph{derived localizations} of dg algebras are homotopy epimorphisms. On the other hand, derived localizations are also \emph{homological epimorphisms}, i.e. maps $A\to B$ such that
the multiplication map $B\otimes^{\Le}_AB\to B$ is a quasi-isomorphism. The property of being a homological epimorphism has many nice implications for the induced functors on derived categories and it is natural to ask (especially having in mind the corresponding non-homotopy result) whether homotopy and homological epimorphisms are the same thing. Our first main result is that this is, indeed, true.

Next, we consider topological applications. Given a connected space $X$ we denote its based loop space by $GX$; then its chain complex $C_*(GX,\ground)$ (simplicial or singular) with coefficients in a commutative ring $\ground$  is a dg $\ground$-algebra. Our second main result is a characterization of  $\ground$-acyclic maps $f:X\to Y$ as those maps for which the map of dg algebras $C_*(GX,\ground),\to C_*(GY,\ground)$ is a homological epimorphism. Equivalently, $f$ is acyclic if an only if it induces a Verdier localization of the triangulated categories of cohomologically locally constant sheaves (also known as infinity local systems) on $X$ and $Y$ .

Finally, we consider, for a given connected space $X$, its $p$-plus-construction $X^+_p$; then the canonical map $X\to X^+$ is a universal $\mathbb{F}_p$-acyclic map out of $X$ where $\mathbb{F}_p$ is the field with $p$ elements. We show that if $X$ is such that $\pi_1(X)$ has a perfect subgroup of finite index, then the map  $C_*(GX,\mathbb{F}_p)\to C_*(GX^+_p,\mathbb{F}_p)$ admits a purely algebraic description as a certain Bousfield localization of the category of dg $C_*(GX,\mathbb{F}_p)$-modules.  Note that $H_0(GX,\mathbb{F}_p)\cong \mathbb{F}_p[\pi_1(X)]$, the $\mathbb{F}_p$--group ring of $\pi_1(X)$. In the case when $\pi_1(X)$ is \emph{finite} we show that the dg algebra $C_*(GX^+_p,\mathbb{F}_p)$ is (quasi-isomorphic to) the derived localization of $C_*(GX,\ground)$ at a certain idempotent of $\mathbb{F}_p[\pi_1(X)]$. In the case $X$ has no higher homotopy groups, i.e. is the classifying space of a finite group, this is essentially equivalent to the main result \cite{Ben} (which, however, was formulated without invoking derived localizaton).

\subsection{Notation and conventions} We work in the category of $\mathbb Z$-graded (dg) modules over a fixed commutative ground ring $\ground$; an object in this category is a pair $(V,d_V)$ where $V$ is a graded $\ground$-module and $d_V$ is a differential on it; it will always be assumed to be of homological type (so it lowers the degree of a homogeneous element). Unmarked tensor products and homomorphisms will be understood to be taken over $\ground$; we will abbreviate `differential graded' to `dg'. The \emph{suspension} of a graded vector space $V$ is the graded vector space $\Sigma V$ so that $(\Sigma V)_i=V_{i+1}$. Quasi-isomorphisms and isomorphisms will be denoted by $\simeq$ and $\cong$ respectively.

A dg algebra is an associative monoid in the dg category of dg vector spaces with respect to the standard monoidal structure given by the tensor product. Given a map $A\to B$ of dg algebras (not necessarily central) we will refer to $B$ as a dg $A$-algebra. A dg vector space $V$ is a (left) dg module over a dg algebra $A$ if it is supplied with a dg map $A\otimes V\to V$ satisfying the usual conditions of associativity and unitality; a right dg  module is defined similarly.

The categories of dg algebras and dg modules over a dg algebra admit structures of closed model categories; cf. \cite{BCL} for an overview. We will denote by $\DGA_{\ground}$ the category of dg $\ground$-algebras and for a dg algebra $A$ we write $D(A)$ for its derived category. Recall that objects of $D(A)$ are cofibrant left dg $A$-modules and morphisms are chain homotopy classes of dg module maps. For a dg algebra $A$, a left dg  $A$-module $M$ and a right dg $A$-module $N$ there is defined their tensor product $N\otimes_A M$ and their \emph{derived} tensor product $N\otimes^{\Le}_AM$. The latter is defined as either $N^\prime\otimes_A M$ or $N\otimes_A M^\prime$ where $M^\prime$ and $N^\prime$ are cofibrant replacements of $M$ and $N$ respectively. Similarly for two left (or right) dg $A$-modules $M,N$ their derived hom $\RHom_A(M,N)$ is defined as $\Hom(M^\prime,N)$ where $M^\prime$ is a cofibrant replacement of $M$.

The category of reduced simplicial sets will be denoted by $\SSet_*$ and the category of simplicial groups -- by $\SGp$. By `a space' we mean `a simplicial set', nevertheless all the results in the paper are of homotopy invariant nature and so they  make sense and are valid for topological spaces by the well known correspondence between topological spaces and simplicial sets.
\section{Derived free products of dg algebras}
Let $A,B$ and $C$ be graded algebras with $A$ being flat over $\ground$. In this case we can form the free product $B*_A C$, this is again a graded algebra satisfying an appropriate universal property; if $A, B$ and $C$ are dg algebras, then so is $B*_A C$. The derived version $B*^{\Le}_AC$ is described in \cite[Section 2]{BCL}. This is a homotopy pushout in the closed model category of dg algebras and can be defined concretely as $B^\prime *_AC^\prime$ where $B^\prime$ and $C^\prime$ are $A$-cofibrant replacements of $A$-algebras $B$ and $C$ respectively.

For an $A$-algebra $B$ we denote by $\overline{B}$ the cokernel of the unit map $A\to B$; it is clearly a dg $A$-bimodule. Then we have the following important technical result.
\begin{lem}\label{lem:filtration}	
Let $A$ be a dg algebra and $B,C$ be dg $A$-algebras. Assume that the unit maps $A\to B$ and $A\to C$ are cofibrations of left $A$-modules. Then $B*_AC$ has a natural filtration by dg $A$-bimodules $0=F_0\subset B=F_1\subset F_2\subset\ldots$ with  $ \bigcup_n F_n=B*_AC$ and $F_nF_k\subset F_{n+k}$
and such that 
\begin{align}\label{eq:filtration}
F_{2n+1}/F_{2n}\cong B\otimes_A(\overline{C}\otimes_A\overline{B})^{\otimes_{A}n};\\
F_{2n+2}/F_{2n+1}\cong B\otimes_A(\overline{C}\otimes_A\overline{B})^{\otimes_{A}n}\otimes_A\overline{C}
\end{align}	
for $n\geq 0$.
\end{lem}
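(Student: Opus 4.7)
The plan is to construct the filtration explicitly by using the cofibrancy hypothesis to split the unit maps as graded $A$-modules, and then to obtain an ``alternating tensor word'' decomposition of $B*_A C$ from which the filtration and its subquotients can simply be read off.

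First I would invoke the hypothesis that $A\to B$ and $A\to C$ are cofibrations of left $A$-modules: in the standard model structure on dg $A$-modules, cofibrations are in particular split monomorphisms of underlying graded modules, so I obtain graded left $A$-module splittings $B\cong A\oplus \overline{B}$ and $C\cong A\oplus \overline{C}$. This lets me regard $\overline{B}$ and $\overline{C}$ as $A$-sub-bimodules of $B$ and $C$ respectively.

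The key intermediate step is to establish the ``normal form'' decomposition
\[
B*_A C \;\cong\; A \;\oplus\; \bigoplus_{w} M_w
\]
as graded $A$-bimodules (ignoring differentials), where $w$ runs over nonempty alternating words in the two-letter alphabet $\{\overline{B},\overline{C}\}$ and $M_w$ is the corresponding iterated $\otimes_A$-product. This is the dg/$A$-linear analogue of the classical normal form theorem for amalgamated free products. One proves it by endowing the right hand side with an algebra structure: the product of two words is their concatenation, and whenever two factors from the same side become adjacent one multiplies them inside $B$ (resp.\ $C$) and re-splits the result according to $B=A\oplus\overline{B}$ (resp.\ $C=A\oplus\overline{C}$), with the $A$-component being absorbed into the neighbouring tensor factor via the $A$-bimodule structure. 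One then checks associativity of this product and the universal property of the free product. This is the main obstacle of the argument: it is standard material but requires careful case analysis of how concatenating two alternating words can contract; a diamond-lemma style verification or an explicit induction on word length both work.

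Granted the decomposition, the filtration is essentially immediate. Regrouping summands by absorbing a possible leading $\overline{B}$ together with the $A$-summand into a full $B$-factor via $B=A\oplus \overline{B}$ rewrites the decomposition as
\[
B*_A C\;\cong\; \bigoplus_{n\ge 0} B\otimes_A (\overline{C}\otimes_A \overline{B})^{\otimes_A n} \;\oplus\; \bigoplus_{n\ge 0} B\otimes_A (\overline{C}\otimes_A \overline{B})^{\otimes_A n}\otimes_A \overline{C}.
\]
I then define $F_n$ to be the sum of those summands with total number of tensor factors at most $n$; each such $F_n$ is a sub-$A$-bimodule, and the claimed subquotient identifications fall out by construction. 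Multiplicativity $F_nF_k\subset F_{n+k}$ is clear because the concatenation-with-contraction product can only decrease, never increase, the total word length. Finally, the filtration is preserved by the differential since the differential on $B*_A C$ is the unique derivation extending those of $B$ and $C$, and these respect alternating word length, completing the verification.
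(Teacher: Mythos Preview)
Your approach is correct in spirit but differs from the paper's. The paper defines the filtration directly by monomial length---$F_{2n}$ is spanned by monomials $b_1c_1\ldots b_nc_n$ and $F_{2n+1}$ by $b_1c_1\ldots b_nc_nb_{n+1}$---so that exhaustiveness, multiplicativity, and compatibility with the differential are immediate. For the identification of the associated graded it simply invokes Cohn's structure theorem for free products of rings (using only that $\overline{B},\overline{C}$ are \emph{flat} left $A$-modules, a consequence of cofibrancy). You instead reprove the normal form decomposition from scratch via explicit splittings; this is the content of Cohn's theorem in the free case, so your argument is more self-contained but considerably longer, and you are in effect asserting a direct-sum decomposition of $B*_AC$, which is stronger than what the lemma actually claims about the associated graded.

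One genuine imprecision to fix: the cofibrancy hypothesis gives splittings $B\cong A\oplus\overline{B}$ only as graded \emph{left} $A$-modules, so your sentence ``This lets me regard $\overline{B}$ and $\overline{C}$ as $A$-sub-bimodules'' is not correct as stated. The image of $\overline{B}$ under the section need not be closed under right multiplication by $A$. Your concatenation-and-resplit multiplication can still be made to work, but checking that it is well-defined over the relations $\overline{x}a\otimes\overline{y}=\overline{x}\otimes a\overline{y}$ in the $\otimes_A$'s is exactly where the nontrivial bookkeeping lies, and it does not follow formally from a bimodule splitting you do not have. This is the step that Cohn's theorem (or a careful diamond-lemma argument) handles.
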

\begin{proof}
Let $F_{2n}, F_{2n+1}\subset B*_AC$ be spanned by the monomials $b_1c_1\ldots b_nc_n$ and $b_1c_1\ldots b_nc_n b_{n+1}$ with $b_i\in B$ and $c_i\in C$ respectively. It is clear that this filtration consists of dg $A$-bimodules, is exhaustive and multiplicative. Since $A\to B$ and $A\to C$ are cofibrations of left dg $A$-modules, it follows that $\overline{B}$ and $\overline{C}$ 	are cofibrant as left $A$-modules, in particular they are $A$-flat, after forgetting the differential. Now the required conclusion on the associated graded quotients follows from \cite[Theorem 4.6]{Coh1}, cf. also \cite[page 206]{Coh} for a simpler argument in the case when $C$ and $B$ are \emph{free} as left $A$-modules.
\end{proof}	
\begin{rem}\label{rem:weaker}
	In fact, the conclusion of Lemma \ref{lem:filtration} holds under the weaker assumptions that $A\to B$ and $A\to C$ are injections and $\overline{B},\overline{C}$ are flat left $A$-modules since this is what is required for the application of \cite[Theorem 4.6]{Coh1}. Moreover, modifying the filtration so that its components are spanned by monomials \emph{ending} with an element in $B$ rather than beginning with one, one obtains a similar conclusion under the assumption that $\overline{B},\overline{C}$ are flat \emph{right} $A$-modules.
\end{rem}
\begin{cor}\label{cor:derivedfree} Let $A,B, C$ be as in Lemma \ref{lem:filtration}. Then there is quasi-isomorphism of dg algebras $B*_AC\simeq B*^{\Le}_AC$. 	
\end{cor}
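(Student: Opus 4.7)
My plan is to compare $B *_A C$ with the derived free product $B *^{\Le}_A C = B' *_A C'$, where $B'\to B$ and $C'\to C$ are cofibrant replacements of $B$ and $C$ in the model category of dg $A$-algebras. Since such cofibrant replacements are in particular cofibrations of left $A$-modules, both the source and target of the natural comparison map $B'*_A C' \to B *_A C$ (induced by the replacement quasi-isomorphisms) satisfy the hypotheses of Lemma \ref{lem:filtration}.

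Next, I would observe that the map $B' *_A C' \to B *_A C$ respects the filtrations produced by Lemma \ref{lem:filtration}, so it induces a map on associated graded pieces. In filtration degree $2n+1$ this is the evident map
\[
B'\otimes_A(\overline{C'}\otimes_A\overline{B'})^{\otimes_A n}\longrightarrow B\otimes_A(\overline{C}\otimes_A\overline{B})^{\otimes_A n},
\]
and similarly for even degrees. Because $\overline{B'},\overline{C'},\overline{B},\overline{C}$ and $B',B$ are all flat as left $A$-modules (by cofibrancy), both sides compute the iterated derived tensor product $B\otimes^{\Le}_A(\overline{C}\otimes^{\Le}_A\overline{B})^{\otimes^{\Le}_A n}$. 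Since $B'\simeq B$, $C'\simeq C$ and hence $\overline{B'}\simeq\overline{B}$, $\overline{C'}\simeq\overline{C}$, the map on each associated graded piece is therefore a quasi-isomorphism.

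Finally, I would promote this to a quasi-isomorphism on the full filtered objects. By induction on $n$, using the short exact sequences $0\to F_n\to F_{n+1}\to F_{n+1}/F_n\to 0$ together with the five lemma applied to their long exact sequences in homology, the induced map $F_n(B'*_A C')\to F_n(B*_A C)$ is a quasi-isomorphism for every $n$. Since homology commutes with the filtered colimits $B' *_A C' = \colim_n F_n(B'*_A C')$ and $B *_A C = \colim_n F_n(B*_A C)$, the comparison map $B' *_A C' \to B *_A C$ is itself a quasi-isomorphism, which gives the desired identification $B*^{\Le}_A C \simeq B*_A C$.

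The main technical point to be careful with is the identification of the associated graded pieces as models for the same derived tensor product: this is where the flatness consequences of the cofibrancy hypotheses on $A\to B$, $A\to C$, $A\to B'$ and $A\to C'$ are essential, so that none of the underived tensor products appearing in the filtration quotients need to be corrected. Once this flatness bookkeeping is in place, the filtered comparison and the passage to the colimit are standard.
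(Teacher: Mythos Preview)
Your proof is correct and follows essentially the same strategy as the paper's: compare the two free products via the natural map $B'*_AC'\to B*_AC$, use the filtration of Lemma~\ref{lem:filtration} on both sides, check that the induced map on associated graded pieces is a quasi-isomorphism, and then pass to the total object. The paper phrases the last step as an isomorphism of $E_1$-terms of the associated spectral sequences, while you spell it out via induction on the filtration together with the five lemma and passage to the filtered colimit; these are equivalent formulations of the same argument. One small terminological point: where you say ``flat as left $A$-modules (by cofibrancy)'' the relevant property is really that cofibrant left $A$-modules are \emph{homotopically} flat (so that underived tensor products over $A$ compute derived ones and quasi-isomorphisms are preserved); mere flatness in the ordinary sense would not suffice, as the paper's subsequent remark makes explicit, but since everything in sight is cofibrant this does not affect your argument.
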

\begin{proof}	
	Let $B^\prime, C^\prime$ be $A$-cofibrant replacements of $B$ and $C$ respectively. Then we have a map $$f:B*^{\Le}_AC:=B^\prime *_AC^\prime\to B*_AC.$$
Consider the filtrations on $B*_AC$ and $B^\prime *_AC^\prime$ described in Lemma \ref{lem:filtration}. Then the map $f$ induces a map on the associated graded to these filtrations and, since $\overline{B}, \overline{B^\prime}, \overline{C}, \overline{C^\prime}$ are cofibrant as left $A$-modules, we conclude that the $E_1$-terms of the corresponding spectral sequences are isomorphic and the desired conclusion follows.
\end{proof}
\begin{rem}  It is natural to ask whether the conclusion of Corollary \ref{cor:derivedfree} holds under weaker conditions
	than $A$-cofibrancy of $B$ and $C$, cf. Remark \ref{rem:weaker}. Assuming that the unit maps $A\to B$ and $A\to C$ are injections and that $\overline{B}$ and $\overline{C}$ are flat as left $A$ modules (disregarding the differential) allows to identify the associated graded of the appropriate filtration of  $B*_AC$. However in order to ensure that the iterated tensor product of  $\overline{B}$ and $\overline{C}$ and $B$ computes the \emph{derived} tensor product, one has to assume, in addition, that $\overline{B}$ and $\overline{C}$ are \emph{homotopically flat} as left dg $A$-modules. Recall that a left dg $A$-module $M$ is called homotopically flat if for any right $A$-module $N$ it holds that $N\otimes^{\Le}_A M\simeq N\otimes_A M$; e.g. any cofibrant module is homotopically flat. Thus, $B*_AC$ computes the derived free product if the unit maps $A\to B, A\to C$ are injections and $\overline{B},\overline{C}$ are flat left $A$-modules as well as homotopically flat left $A$-modules (such dg modules are called \emph{semi-flat}, cf. for example \cite{CH} regarding this nomenclature).
	
	Of course, `left' can be replaced with `right' in the above discussion, cf. Remark \ref{rem:weaker}. In particular, the conclusion of Corollary \ref{cor:derivedfree} holds under the assumption that the unit maps $A\to B$ and $A\to C$ are cofibrations of \emph{right} dg $A$-modules.
	
\end{rem}

\section{Modules of relative differentials for dg algebras}
In this section we recall the construction of the  modules of relative differentials for dg algebras. The treatment of \cite{Coh} extends to the dg case in an obvious manner.
\begin{defi}  
For a dg $A$-algebra  $B$ the module of relative differentials	$\Omega_{A}(B)$ is the kernel of the multiplication map:
\begin{equation}\label{eq:redif}
\Omega_A(B)\to B\otimes_AB\to B.
\end{equation}
Thus, $\Omega_A(B)$ is a dg $B$-bimodule. If $A\to B^\prime$ is an $A$-cofibrant replacement of the $A$-algebra $B$, we define the derived module of relative differentials $\Omega_A^{\Le}(B):=\Omega_{A}(B^\prime)$. Thus, $\Omega^{\Le}_A(B)$ is well defined as an object in the homotopy category of dg $B$-bimodules.
\end{defi}	
Suppose that $f:A\to B$ is a cofibration of left dg $A$-modules. The short exact sequence (\ref{eq:redif}) is split as in the category of left dg $B$-modules by a map $B\to B\otimes_AB$; $b\mapsto b\otimes 1$. The cokernel of the latter map is isomorphic as a left dg $B$-module to $B\otimes_A \bar{B}$ (recall that $\bar{B}$ is the cokernel of $f$).  It follows that $B\otimes_A\overline{B}$ can be identified with $\Omega_A(B)$ as a left dg $B$-module. 
	
The formation of the module of relative differentials behaves well with respect to free products:
\begin{lem}\label{lem:differential}
	Let $A,B$ and $C$ be dg algebras. \begin{enumerate}\item There is an isomorphism of dg $B*_AC$-bimodules:
	\[
	\Omega_{A}(B*_AC)\cong \big((B*_AC)\otimes_B\Omega_A(B)\otimes_B (B*_AC)\big)\oplus \big((B*_AC)\otimes_C\Omega_A(C)\otimes_C (B*_AC)\big). 
	\]
	\item 
	The maps $\Omega_A(B)\to \Omega_{A}(B*_AC)$ and $\Omega_A(C)\to \Omega_{A}(B*_AC)$ induced by the canonical maps 
	$B\to B*_AC$ and $C\to B*_AC$ are the compositions
	\[
	\Omega_A(B)\xrightarrow{1\otimes\id\otimes 1} (B*_AC)\otimes_B\Omega_A(B)\otimes_B (B*_AC)\hookrightarrow \Omega_{A}(B*_AC)
	\]
	and
	\[
	\Omega_A(C)\xrightarrow{1\otimes\id\otimes 1}(B*_AC)\otimes_B\Omega_A(C)\otimes_C (B*_AC)\hookrightarrow \Omega_{A}(B*_AC)
	\] \end{enumerate}
\end{lem}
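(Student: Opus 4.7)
My plan is to prove (1) by applying the Yoneda lemma to the corepresentability of the module of relative differentials, and then to read off (2) by tracing through the natural isomorphism. Recall that for any dg $A$-algebra $R$ and any dg $R$-bimodule $M$, there is a natural bijection
$$\Hom_{R\text{-bimod}}(\Omega_A(R),M)\cong\operatorname{Der}_A(R,M),$$
with an $R$-bimodule map $\varphi$ on the left corresponding to $\varphi\circ d$ for the universal derivation $d\colon R\to\Omega_A(R)$, $r\mapsto 1\otimes r - r\otimes 1$. This bijection is implemented via the square-zero extension $R\oplus M$: $A$-linear derivations $R\to M$ are exactly the $A$-algebra sections of the projection $R\oplus M\to R$.

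The heart of the argument is the natural isomorphism, valid for any dg $(B*_AC)$-bimodule $M$,
$$\operatorname{Der}_A(B*_AC,M)\cong\operatorname{Der}_A(B,M)\oplus\operatorname{Der}_A(C,M),$$
where on the right $M$ is treated as a $B$- or $C$-bimodule via the canonical maps into $B*_AC$. This is immediate from the universal property of the free product applied to $A$-algebra sections of $(B*_AC)\oplus M\to B*_AC$: such a section is uniquely determined by a pair of $A$-algebra sections of the restricted projections along $B\to B*_AC$ and $C\to B*_AC$, i.e.\ by a pair of $A$-linear derivations $B\to M$ and $C\to M$. Combining this with the restriction–extension adjunction
$$\Hom_{B\text{-bimod}}(\Omega_A(B),M)\cong\Hom_{(B*_AC)\text{-bimod}}\big((B*_AC)\otimes_B\Omega_A(B)\otimes_B(B*_AC),\,M\big),$$
and its analogue for $C$, produces a natural isomorphism between the functors corepresented by $\Omega_A(B*_AC)$ and by the bimodule $X$ on the right-hand side of (1). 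Yoneda then yields (1).

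For (2), the canonical map $\Omega_A(B)\to\Omega_A(B*_AC)$ is, under the bimodule–derivation correspondence, adjoint to the composition of the universal derivation $d\colon B*_AC\to\Omega_A(B*_AC)$ with the inclusion $B\hookrightarrow B*_AC$. Unwinding the construction above identifies this with the universal derivation $d_B\colon B\to\Omega_A(B)$ followed by the map $\omega\mapsto 1\otimes\omega\otimes 1$ into the first summand of $X$, which matches the stated formula; the argument for $C$ is symmetric.

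The main subtlety lies in keeping the dg signs consistent through the square-zero extension argument — essentially tracking Koszul signs in the identification between sections and derivations — but no genuine obstacle arises, and it is notable that no cofibrancy hypothesis is needed since the argument is purely formal at the level of the strict (non-derived) free product.
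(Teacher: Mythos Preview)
Your argument is correct and self-contained. The paper itself does not give a proof: it simply cites \cite[Theorem 5.8.8]{Coh} for part (1) and remarks that unraveling that proof yields (2). Your approach via the corepresentability of $\Omega_A(-)$ by $A$-linear derivations, the universal property of the free product applied to sections of the square-zero extension, and the extension--restriction adjunction for bimodules is the standard route to this result and is, in essence, how Cohn's argument proceeds in the ungraded case; you have correctly observed that nothing changes in the dg setting beyond bookkeeping, and that no cofibrancy or flatness hypotheses are required since the statement concerns the strict free product. Your tracing of the naturality for part (2) is also accurate: the Yoneda isomorphism, evaluated on the universal derivation, identifies the induced map $\Omega_A(B)\to\Omega_A(B*_AC)$ with $1\otimes\id\otimes 1$ into the first summand, exactly as stated.
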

\begin{proof}
Part (1) of the statement above is \cite[Theorem 5.8.8]{Coh}. Unraveling the proof in op. cit. yields (2). 
\end{proof}	

\section{Equivalence of homotopy and homology epimorphisms}
We will now introduce the notions of homotopy and homological epimorphisms for dg algebras and show that they are equivalent.
\begin{defi}
	Let $f:A\to B$ be a map of dg algebras making $B$ into a dg $A$-bimodule.\begin{enumerate}
		\item $f$ is said to be a \emph{homological epimorphism} if the multiplication map $B\otimes^{\Le}_AB\to B$ is a quasi-isomorphism.
		\item $f$ is said to be a \emph{homotopy epimorphism} if the codiagonal map $B*^{\Le}_AB\to B$ is a quasi-isomorphism.
	\end{enumerate}
\end{defi}

\begin{rem}Homotopy epimorphism can be defined in any closed model category. The notion of a homological epimorphism is more restrictive as it requires the structure of a closed model category (or something similar) on modules over monoids in a given closed model category. It would be interesting to investigate whether the equivalence between homological and homotopy epimorphism is a general categorical phenomenon (rather than special to dg algebras).
	
Homological epimorphisms of dg algebras were studied in \cite{Pauk}. Their exceptionally good property is that they induce smashing localization functors on the level of derived categories (indeed they are characterized by this property).
\end{rem}
We have the following characterization of homological epimorphisms, whose proof is a straightforward consequence of definitions.
\begin{lem}\label{lem:acyclicdiff}
	A map $A\to B$ is a homological epimorphism if an only if $\Omega^{\Le}_A(B)$ is acyclic.
\end{lem}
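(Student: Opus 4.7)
The plan is to realize $\Omega_A^{\Le}(B)$ as the homotopy fiber of the derived multiplication map and read off the equivalence from there. To do this I choose an $A$-cofibrant replacement $A \to B'$ of the $A$-algebra $B$, so that by definition $\Omega_A^{\Le}(B) = \Omega_A(B')$ fits into the short exact sequence
\[
0 \to \Omega_A(B') \to B' \otimes_A B' \to B' \to 0
\]
of dg $B'$-bimodules coming from (\ref{eq:redif}) applied to $B'$.

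Next I would observe that, because $A \to B'$ is a cofibration of left dg $A$-modules (which is part of being an $A$-cofibrant replacement), the discussion preceding Lemma \ref{lem:differential} tells us this short exact sequence is split as a sequence of left dg $B'$-modules via $b \mapsto b \otimes 1$. In particular it gives a distinguished triangle
\[
\Omega_A^{\Le}(B) \to B' \otimes_A B' \to B' \to \Sigma\,\Omega_A^{\Le}(B)
\]
in the derived category of $\ground$-modules, so the third arrow is a quasi-isomorphism if and only if $\Omega_A^{\Le}(B)$ is acyclic.

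It remains to identify this third arrow with the derived multiplication map. Since $B'$ is cofibrant, hence homotopically flat, as a left dg $A$-module and $B' \simeq B$, we have $B' \otimes_A B' \simeq B \otimes_A^{\Le} B$, and the map $B' \otimes_A B' \to B'$ models the derived multiplication $B \otimes_A^{\Le} B \to B$. Combining this with the previous paragraph yields the claim in both directions.

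There is no real obstacle here: the only step that requires a moment's thought is that $B' \otimes_A B'$ correctly computes $B \otimes_A^{\Le} B$ and that the splitting of (\ref{eq:redif}) for $B'$ upgrades the kernel sequence to a genuine distinguished triangle, both of which are standard consequences of $A$-cofibrancy of $B'$.
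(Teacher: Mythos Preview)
Your proposal is correct and is exactly the unwinding of definitions the paper alludes to; the paper itself offers no argument beyond declaring the lemma ``a straightforward consequence of definitions''. One minor point: you do not actually need the left $B'$-module splitting to obtain the distinguished triangle---any short exact sequence of dg modules already yields one---but invoking it does no harm.
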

\noproof
\begin{theorem}\label{thm:main}
	A dg algebra map $A\to B$ is a homological epimorphism if and only if it is a homotopy epimorphism. 
\end{theorem}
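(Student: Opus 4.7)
The plan is to reduce both implications to the criterion of Lemma~\ref{lem:acyclicdiff}, which says that $f:A\to B$ is a homological epimorphism if and only if $\Omega_A^{\Le}(B)\simeq 0$. I fix an $A$-cofibrant replacement $B'\to B$, so by Corollary~\ref{cor:derivedfree} one has $B*^{\Le}_AB\simeq B'*_AB'$, while $\Omega^{\Le}_A(B)=\Omega_A(B')\cong B'\otimes_A\overline{B'}$ as a left $B'$-module. Recall that $\overline{B'}$ is cofibrant, hence flat, as a left $A$-module.

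For the forward direction (homological $\Rightarrow$ homotopy), I apply the filtration of Lemma~\ref{lem:filtration} to $B'*_AB'$ (with $B=C=B'$); combining the two formulas of~(\ref{eq:filtration}) gives the uniform description $F_n/F_{n-1}\cong B'\otimes_A\overline{B'}^{\,\otimes_A(n-1)}$ for $n\geq 1$. For $n\geq 2$ this rewrites as $\Omega_A(B')\otimes_A\overline{B'}^{\,\otimes_A(n-2)}$, and left $A$-flatness of $\overline{B'}$ shows that tensoring with it preserves acyclicity, so the hypothesis $\Omega_A(B')\simeq 0$ forces $F_n/F_{n-1}\simeq 0$ for all $n\geq 2$. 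The codiagonal restricts on $F_1=B'$ to the identity, so the spectral sequence of the filtration collapses to show that the codiagonal $B'*_AB'\to B'$ is a quasi-isomorphism.

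For the reverse direction, assume the codiagonal $\alpha:B'*_AB'\to B'$ is a quasi-isomorphism. Lemma~\ref{lem:differential}(1) gives a decomposition
\[
\Omega_A(B'*_AB')\;\cong\;\bigl((B'*_AB')\otimes_{B'}\Omega_A(B')\otimes_{B'}(B'*_AB')\bigr)^{\oplus 2}
\]
of $(B'*_AB')$-bimodules. Derived extension of scalars along the quasi-isomorphism $\alpha$ is an equivalence of derived bimodule categories; applied to each summand the two outer $(B'*_AB')$-factors collapse, yielding a quasi-isomorphism of $B'$-bimodules
\[
\Omega^{\Le}_A(B)^{\oplus 2}\;\xrightarrow{\;\sim\;}\;\Omega^{\Le}_A(B).
\]
By Lemma~\ref{lem:differential}(2), the two canonical maps $\iota_1,\iota_2:B'\to B'*_AB'$ induce, after applying $\Omega^{\Le}_A$, the inclusions of the first and second summands, and the relations $\alpha\iota_i=\id$ force the displayed map to be the summation $(x,y)\mapsto x+y$. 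The tautological short exact sequence
\[
0\to\Omega^{\Le}_A(B)\xrightarrow{x\mapsto(x,-x)}\Omega^{\Le}_A(B)^{\oplus 2}\xrightarrow{\mathrm{sum}}\Omega^{\Le}_A(B)\to 0
\]
then has a quasi-isomorphism on the right, and the long exact sequence in cohomology forces $\Omega^{\Le}_A(B)\simeq 0$.

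The main obstacle I anticipate is in the reverse direction: identifying the derived map $\Omega^{\Le}_A(B)^{\oplus 2}\to\Omega^{\Le}_A(B)$ as summation requires tracking the bimodule decomposition of Lemma~\ref{lem:differential}(1) through the equivalence induced by $\alpha$, and then matching the summand inclusions with the maps induced by $\iota_1,\iota_2$ via Lemma~\ref{lem:differential}(2). The forward direction, by contrast, reduces to the more routine spectral sequence input from Lemma~\ref{lem:filtration}.
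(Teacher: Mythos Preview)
Your proof is correct and follows essentially the same strategy as the paper: the forward direction is identical (collapse of the filtration from Lemma~\ref{lem:filtration}), and the reverse direction uses the same decomposition from Lemma~\ref{lem:differential}, the only difference being that the paper finishes by noting that the inclusion $\Omega_A(B)\hookrightarrow\Omega_A(B)^{\oplus2}$ induced by $\id*1$ is a quasi-isomorphism (so the complementary summand is acyclic), whereas you dually note that the summation map induced by the codiagonal is a quasi-isomorphism. One small wording caution in the forward direction: to propagate acyclicity through $-\otimes_A\overline{B'}$ you need homotopical flatness (i.e.\ cofibrancy) of $\overline{B'}$, not merely graded flatness; since you already recorded that $\overline{B'}$ is cofibrant, this is not a gap.
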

\begin{proof}
Without loss of generality we can assume that $A\to B$ is a cofibration of dg algebras; then $B*_AB\simeq B*^{\Le}_AB$. Suppose that $A\to B$ is a homological epimorphism. It suffices to show that the map $\id*1:B\cong B*_AA\to B*_AB$ is a quasi-isomorphism. Considering the filtration $\{F_n\}$ on $B*_AB$ constructed in Lemma \ref{lem:filtration} and taking into account that $B\otimes_A \overline{B}\simeq 0$ since $A\to B$ is a homological epimorphism, we conclude that the associated graded quotients $F_n/F_{n-1}$ are acyclic for $n>0$ and so $\id*1:B\to B*_AB$ is indeed a quasi-isomorphism.

Conversely, suppose that $A\to B$ is a homotopy epimorphism; we will show that $\Omega^{\Le}_A(B)\simeq \Omega_A(B)$ is acyclic. We have the following quasi-isomorphisms of $B$-bimodules:
\begin{align*}
\Omega_A(B)&\simeq \Omega_A(B*_AB)\\
&\simeq\big(B*_AB\otimes_B\Omega_A(B)\otimes_B B*_AB\big)\oplus \big(B*_AB\otimes_B\Omega_A(B)\otimes_B B*_AB\big)\\
&\simeq \Omega_A(B)\oplus\Omega_A(B).
\end{align*}	
Here the second quasi-isomorphism follows from Lemma \ref{lem:differential} (1) and the third -- since $B*_AB\simeq B$ and taking into account that $B*_AB$ is cofibrant both as a left and right $B$-module. By Lemma \ref{lem:differential} (2) the map $B\xrightarrow{\id*1} B*_AB$ (that is a quasi-isomorphism) induces a quasi-isomorphism
$\Omega_A(B)\simeq \Omega_A(B)\oplus\Omega_A(B)$ that is also the inclusion into the left direct summand. This is only possible if $\Omega_A(B)\simeq 0$ as required.\end{proof}
\section{$\ground$-acyclic maps}
Recall that there is a left Quillen functor $G:\SSet_*\mapsto\SGp$ that is part of a Quillen equivalence between the categories of reduced simplicial sets and simplicial groups, cf. \cite{GoJ}. We will also consider the left Quillen functor $C_*:\SSet_*\mapsto \DGA_{\ground}$ associating to a simplicial group $H$ its normalized simplicial chain complex supplied with the Pontrjagin product $C_*(H,\ground)\otimes C_*(H,\ground)\to C_*(H,\ground)$ induced by the simplicial group operation $H\times H\to H$. 

Interpreted topologically, the composite functor $C_*\circ G:\SSet_*\to\DGA_{\ground}$ associates to a reduced simplicial set $X$ a dg algebra that is quasi-isomorphic to the chain algebra on the loop space of $|X|$, the geometric realization of $X$. An obvious modification allows one to consider it as a functor from the homotopy category of connected (not necessarily reduced) spaces. 
\begin{defi}
	A map between connected spaces is $\ground$-acyclic if its homotopy fibre  $F$ is $\ground$-acyclic: $H_n(F,\ground)=0, n>0$ and $H_0(F,\ground)=\ground$.
\end{defi}
\begin{rem}
	It is customary to call $\mathbb Z$-acyclic maps simply acyclic.
\end{rem}	
\begin{lem}\label{lem:acyclicmap}
	Let $f:X\to Y$ be a map between connected spaces. Then $f$ is $\ground$-acyclic and induces an isomorphism $\pi_1(X)\cong \pi_1(Y)$ if and only if the induced map of dg algebras $C_*(GX, \ground)\to C_*(GY, \ground)$ is a quasi-isomorphism.
\end{lem}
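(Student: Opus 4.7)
The plan is to reduce to the simply connected case via universal covers, then invoke classical bar--cobar and comparison theorems.

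Both directions involve the isomorphism $\pi_1(X) \cong \pi_1(Y)$: it is a hypothesis of the ``only if'' direction; conversely, if $C_*(GX, \ground) \to C_*(GY, \ground)$ is a quasi-isomorphism, then on $H_0$ it yields an isomorphism $\ground[\pi_1(X)] \to \ground[\pi_1(Y)]$ of Hopf algebras (with coproduct inherited from the diagonals of $GX, GY$) that is induced from a group homomorphism; comparison of group-like elements then forces $\pi_1(X) \cong \pi_1(Y)$. With the $\pi_1$-iso in hand, lift $f$ to a map $\tilde f: \tilde X \to \tilde Y$ of universal covers. The homotopy fiber of $f$ agrees with the homotopy fiber of $\tilde f$, and based loops in $X$ correspond to paths in $\tilde X$ from the basepoint to any preimage of the basepoint, giving a decomposition $\Omega X \simeq \pi_1(X) \times \Omega \tilde X$ as a space. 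Consequently $C_*(GX, \ground) \to C_*(GY, \ground)$ is a quasi-isomorphism if and only if $C_*(G\tilde X, \ground) \to C_*(G\tilde Y, \ground)$ is one. This reduces the lemma to its simply connected analogue for $\tilde f$.

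It thus suffices to prove: for a map $\tilde f: \tilde X \to \tilde Y$ of simply connected spaces, the homotopy fiber is $\ground$-acyclic if and only if $C_*(G\tilde X, \ground) \to C_*(G\tilde Y, \ground)$ is a quasi-isomorphism. Both conditions are in turn equivalent to $\tilde f$ being a $\ground$-homology equivalence. The equivalence with ``$\ground$-acyclic homotopy fiber'' follows from the Zeeman--Moore comparison theorem applied to the map of Serre spectral sequences from the fibration $F \to \tilde X \to \tilde Y$ to the trivial fibration $* \to \tilde Y \to \tilde Y$, the local coefficient systems being trivial by the simple connectivity of $\tilde Y$. The equivalence with ``quasi-isomorphism on loop space chains'' is the classical Adams--Eilenberg--Moore bar--cobar duality: for simply connected $\tilde X$, the chains $C_*(G\tilde X, \ground)$ are quasi-isomorphic to the cobar construction on the coalgebra $C_*(\tilde X, \ground)$, and conversely $C_*(\tilde X, \ground)$ is recovered (up to quasi-isomorphism) as the bar construction on $C_*(G\tilde X, \ground)$, since $\tilde X \simeq B G\tilde X$.

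The main technical obstacle is the invocation of the bar--cobar duality, which requires simple connectivity and is the reason for first passing to universal covers; both the $\pi_1$-iso hypothesis and the loop-space decomposition above serve precisely to cleanly separate the discrete ``group ring'' part of the problem from the simply connected loop-space factor where bar--cobar applies.
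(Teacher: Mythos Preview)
Your argument is correct and follows the same outline as the paper's: treat the simply connected case via bar--cobar duality together with a Serre spectral sequence comparison, and reduce the general case to it by passing to universal covers, using that $\Omega X$ decomposes into $\pi_1(X)$ many copies of $\Omega\tilde X$ (the paper phrases this reduction via the fibre sequences $G\overline X\to GX\to\pi_1(X)$ and $G\overline Y\to GY\to\pi_1(Y)$, which is the same observation). One minor technical point: recovering the $\pi_1$-isomorphism from the $H_0$-isomorphism via group-like elements of $\ground[\pi_1]$ tacitly assumes $\ground$ has no nontrivial idempotents; for arbitrary $\ground$ it is cleaner to argue directly that a ring map $\ground[G]\to\ground[H]$ induced by a group homomorphism sends the $\ground$-basis $G$ into the $\ground$-basis $H$, hence is an isomorphism of $\ground$-modules if and only if the underlying group map is a bijection.
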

\begin{proof}
Suppose first that $X$ and $Y$ are both simply-connected. Let $f$ be $\ground$-acyclic. It follows by considering the Serre spectral sequence of $f$ that the dg coalgebras $C_*(X,\ground)$ and $ C_*(Y,\ground)$ are quasi-isomorphic.  Then the spectral sequences associated with cobar-constructions
	of the chain coalgebras $C_*(X,\ground)$ and $ C_*(Y,\ground)$ converge strongly to $H_*(GX, \ground)$ and $H_*(GY, \ground)$ respectively (this is where simple connectivity is needed) and it follows that the dg algebras $C(GX, \ground)$ and $C_*(GY, \ground)$ are indeed quasi-isomorphic.
	
Conversely, if 	the map of dg algebras $C_*(GX, \ground)\to C_*(GY, \ground)$ is a quasi-isomorphism then the bar-construction spectral sequence
implies that $f:X\to Y$ induces an isomorphism on $\ground$-homology and thus, again by simple connectivity and the Serre spectral sequence of $f$, it is a $\ground$-acyclic map. The desired statement is therefore proved in the simply-connected case.

If $X$ and $Y$ are not simply-connected, denote by $\overline{X}$ and $\overline{Y}$ their universal covers and note that the condition that $f:X\to Y$ induces an isomorphism on fundamental groups implies that there is a homotopy  pullback diagram of spaces:
\begin{equation}\label{eq:pullback}
\xymatrix{\overline{X}\ar^{\overline{f}}[r]\ar[d]&\overline{Y}\ar[d]\\
	X\ar^f[r]&Y
}	
\end{equation}
where the map $\overline{f}$ is induced by $f$. Then the homotopy fibre of $\overline{f}$ is $\ground$-acyclic and since $\overline{X},\overline{Y}$ are simply-connected we obtain by the argument above that the dg algebras $C_*(G\overline {X},\ground)$ and $C_*(G\overline {Y},\ground)$ are quasi-isomorphic. But clearly there is a homotopy fibre sequence of simplicial groups
\begin{equation}\label{eq:fibre1}
G\overline{X}\to GX\to GB\pi_1(X)\simeq \pi_1(X)
\end{equation}
and similarly 
\begin{equation}\label{eq:fibre2}
G\overline{Y}\to GX\to GB\pi_1(Y)\simeq \pi_1(Y)
\end{equation}
where $B\pi_1(X), B\pi_1(Y)$ are classifying spaces of $\pi_1(X)$ and $\pi_1(Y)$ respectively. It follows that the maps of dg algebras $C_*(G\overline{X},\ground)\to C_*(GX, \ground)$ and $C_*(G\overline{Y},\ground)\to C_*(GY, \ground)$ induce homology isomorphisms in positive degrees and therefore the map $C_*(GX,\ground)\to C_*(GY, \ground)$ 
also has this property. Finally, $H_0(GX,\ground)\cong \ground[\pi_1(X)]\to \ground[\pi_1(Y)]\cong H_0(GY, \ground)$ is also an isomorphism and so the dg algebras $C_*(GX,\ground)$ and $ C_*(GY, \ground)$ are indeed quasi-isomorphic as claimed.

Conversely, suppose that the induced map $C_*(GX,\ground)\to C_*(GY, \ground)$ is a quasi-isomorphism. In particular it gives an isomorphism $H_0(GX,\ground)\cong\ground[\pi_1(X)]\to \ground[\pi_1(Y)]\cong H_0(GY,\ground)$ which implies that $f$ induces an isomorphism $\pi_1(X)\to \pi_1(Y)$.  Again, we conclude that diagram (\ref{eq:pullback}) is a homotopy pullback. Similarly we conclude from (\ref{eq:fibre1}) and (\ref{eq:fibre2}) that the dg algebras $C_*(G\overline{X},\ground)$ and $C_*(G\overline{Y},\ground)$ are quasi-isomorphic and therefore (because of simple-connectivity of $\overline{X}$ and $\overline{Y})$, the map $\overline{f}$ is a $\ground$-acyclic map. Therefore, by \ref{eq:pullback} $f$ is also a $\ground$-acyclic map.
\end{proof}	
\begin{rem}
	If $\ground=\mathbb Z$ then Lemma \ref{lem:acyclicmap} implies that a map $X\to Y$ between connected spaces is a weak equivalence if and only if the induced map $C_*(GX, \mathbb Z)\to C_*(GY, \mathbb Z)$ is a quasi-isomorphism. Surprisingly, this simple and fundamental fact appears to have been noticed only recently, cf. \cite{CHL,RWZ}.
\end{rem}
\begin{theorem}\label{thm:acyclic}
	Let $f:X\to Y$ be a map between two connected spaces. Then the following are equivalent:
	\begin{enumerate}
		\item $f$ is a $\ground$-acyclic map.
		
		\item The induced map of dg algebras $C_*(GX,\ground)\to C_*(GY,\ground)$ is a homotopy epimorphism.
		
		\item The induced map of dg algebras $C_*(GX,\ground)\to C_*(GY,\ground)$ is a homological epimorphism. 
	\end{enumerate}
\end{theorem}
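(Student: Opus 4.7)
Since Theorem~\ref{thm:main} already gives (2)$\Leftrightarrow$(3), my plan is to prove (1)$\Leftrightarrow$(2). The approach is to translate condition (2) into a statement about the homotopy pushout $Z:=Y\cup^h_XY$ of spaces and its codiagonal $p:Z\to Y$, and then identify the homotopy fibre of $p$ in terms of the homotopy fibre $F$ of $f$.

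The first step exploits that $G:\SSet_*\to\SGp$ and $C_*:\SGp\to\DGA_{\ground}$ are both left Quillen, so the composite $C_*\circ G$ preserves homotopy pushouts. This gives a natural quasi-isomorphism
\[
C_*(GY)*^{\Le}_{C_*(GX)}C_*(GY)\simeq C_*(GZ)
\]
intertwining the algebraic codiagonal on the left with $C_*(Gp)$. Applying Lemma~\ref{lem:acyclicmap} to $p$ then translates (2) into the conjunction: $p$ is $\ground$-acyclic and $\pi_1(Z)\to\pi_1(Y)$ is an isomorphism. By van Kampen $\pi_1(Z)\cong\pi_1(Y)*_{\pi_1(X)}\pi_1(Y)$, so the $\pi_1$-condition is equivalent to $\pi_1(X)\to\pi_1(Y)$ being surjective.

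The key geometric input is the identification of $\mathrm{hofib}(p)$. Pulling the homotopy pushout square defining $Z$ back along a basepoint $y_0:*\to Y$ via $p$, and invoking the standard fact that homotopy pullbacks commute with homotopy pushouts in simplicial sets (Mather's cube theorem), the fibres $\mathrm{hofib}(\id:Y\to Y)\simeq *$ and $\mathrm{hofib}(f)=:F$ assemble into
\[
\mathrm{hofib}(p)\simeq *\cup^h_F*\simeq \Sigma F.
\]
Since $\Sigma F$ is always connected and satisfies the suspension isomorphism $\tilde H_n(\Sigma F,\ground)\cong \tilde H_{n-1}(F,\ground)$ for $n\geq 1$, a direct check shows $\Sigma F$ is $\ground$-acyclic if and only if $\tilde H_m(F,\ground)=0$ for all $m\geq 0$, equivalently $F$ is $\ground$-acyclic. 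This condition in turn forces $|\pi_0(F)|=1$, which by the long exact sequence of $f$ makes $\pi_1(X)\to\pi_1(Y)$ surjective automatically, so the $\pi_1$-condition in the reformulation of (2) comes for free.

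I expect the main technical step to be the cube-theorem identification $\mathrm{hofib}(p)\simeq\Sigma F$; the homological reduction from $\Sigma F$ to $F$ and the bookkeeping with $\pi_1$ are then routine, and together they yield (1)$\Leftrightarrow$(2).
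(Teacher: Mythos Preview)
Your proof is correct and follows the same overall strategy as the paper: reduce to (1)$\Leftrightarrow$(2) via Theorem~\ref{thm:main}, use that $C_*\circ G$ is left Quillen and hence preserves homotopy pushouts, and then invoke Lemma~\ref{lem:acyclicmap} to translate the quasi-isomorphism condition on $C_*(G-)$ into a statement about $\ground$-acyclicity plus a $\pi_1$-isomorphism.

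The difference lies in the geometric step. The paper works with the inclusion $i:Y\to Y*^{\Le}_XY$ into one wedge summand and asserts directly that $i$ is $\ground$-acyclic whenever $f$ is (implicitly using that $\ground$-acyclic maps are closed under homotopy cobase change), with the converse dismissed as ``clearly reversible''. You instead work with the codiagonal $p:Z\to Y$ and identify its homotopy fibre explicitly as the unreduced suspension $\Sigma F$ via Mather's second cube theorem, then read off the equivalence $\tilde H_*(\Sigma F,\ground)=0\Leftrightarrow\tilde H_*(F,\ground)=0$ directly. Your route makes both implications, and in particular (2)$\Rightarrow$(1), fully explicit and self-contained; the paper's route is terser but leans on the closure of acyclic maps under cobase change as a known black box, and its ``clearly reversible'' hides essentially the computation you carry out.
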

\begin{proof}
In light of Theorem \ref{thm:main} it suffices to prove the equivalence of (1) and (2). Without loss of generality we assume that $X$ and $Y$ are reduced (as opposed to merely connected) simplicial sets.  The functor $X\mapsto C_*(GX,\ground)$ is a composition of two left Quillen functors and thus, is itself left Quillen and so it preserves homotopy pushouts. It follows that there is a quasi-isomorphism of dg algebras \[C_*(G(Y*^{\Le}_XY),\ground)\simeq C_*(GY,\ground)*^{\Le}_{C_*(GX,\ground)}C_*(GY,\ground).\]
	
Let $f:X\to Y$ be a $\ground$-acyclic map. Then the map $Y\to Y*^{\Le}_XY$ mapping $Y$ to the first (or second) wedge component of  $Y*^{\Le}_XY$ is likewise $\ground$-acyclic and, moreover, by the van Kampen theorem, induces an isomorphism on the fundamental groups. By Lemma \ref{lem:acyclicmap} the corresponding map $C_*(GY,\ground)\to C_*(G(Y*^{\Le}_XY),\ground)\simeq C_*(GY,\ground)*^{\Le}_{C_*(GX,\ground)}C_*(GY,\ground)$ is a quasi-isomorphism, so $C_*(GX,\ground)\to C_*(GY,\ground)$ is a homotopy epimorphism. This chain of implications is clearly reversible and so we obtain the desired if and only if statement.
\end{proof}	
Let $\ground$ be a field. It is known \cite{Hol} that for a connected simplicial set $X$ the derived category $D_{\ground}(|X|)$  of cohomologically locally constant sheaves of $\ground$-modules on $|X|$, the topological realization of $X$ (also known as infinity local systems on $|X|$) is equivalent to $D(C_*(GX,\ground))$, the derived category of the chain algebra on the based loop space of $X$. This leads to the following result.
\begin{cor}
	A map $f:X\to Y$ of connected spaces is $\ground$-acyclic if an only if the inverse image functor $f^{-1}:D_{\ground}(|Y|)\to D_{\ground}(|X|)$ is fully faithful. 
\end{cor}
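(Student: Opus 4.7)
The plan is to chain together three ingredients: Theorem \ref{thm:acyclic}, Hollander's equivalence $D_\ground(|X|)\simeq D(C_*(GX,\ground))$, and the standard fact that a dg algebra map is a homological epimorphism precisely when the induced restriction of scalars on derived categories is fully faithful.

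More concretely, write $\phi:A\to B$ for the induced dg algebra map $C_*(GX,\ground)\to C_*(GY,\ground)$. First I would observe that under Hollander's equivalence, the inverse image functor $f^{-1}:D_\ground(|Y|)\to D_\ground(|X|)$ corresponds to restriction of scalars $\phi_*:D(B)\to D(A)$: indeed, the functoriality of $C_*\circ G$ gives a map $A\to B$ covering $f$, and an infinity local system on $Y$ (a $B$-module) pulled back to $X$ becomes the same underlying complex regarded as an $A$-module through $\phi$. I would spell this out by citing naturality of Hollander's equivalence in $X$.

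Next I would invoke the well-known reformulation: for a map of dg algebras $\phi:A\to B$, the restriction functor $\phi_*:D(B)\to D(A)$ is fully faithful if and only if $\phi$ is a homological epimorphism. The easy direction uses that $\phi_*$ has the left adjoint $-\otimes^{\Le}_A B$, and that $\phi_*$ is fully faithful exactly when the counit $M\otimes^{\Le}_A B\to M$ is a quasi-isomorphism for all $M\in D(B)$; since $B$ generates $D(B)$ as a localizing subcategory and both sides commute with colimits and triangles in $M$, this reduces to the case $M=B$, which is precisely the condition $B\otimes^{\Le}_A B\simeq B$.

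Combining these two reductions with the equivalence (1)$\Leftrightarrow$(3) of Theorem \ref{thm:acyclic}, we get
\[
f\text{ is }\ground\text{-acyclic}\iff \phi\text{ is a homological epimorphism}\iff \phi_*\text{ is fully faithful}\iff f^{-1}\text{ is fully faithful},
\]
which is the claim. The main obstacle is the first step, namely identifying $f^{-1}$ with restriction of scalars along $\phi$ under Hollander's equivalence; this is essentially a naturality statement for the equivalence in the variable $X$, and I would handle it by appealing directly to \cite{Hol} or by noting that both functors admit the same left adjoint (extension of scalars corresponds to $f_!$ on infinity local systems), so they agree by uniqueness of adjoints.
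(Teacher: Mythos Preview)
Your proposal is correct and follows essentially the same route as the paper: identify $f^{-1}$ with restriction of scalars along $\phi$ via Hollander's equivalence, then use that restriction is fully faithful iff $\phi$ is a homological epimorphism, and conclude via Theorem~\ref{thm:acyclic}. The only difference is that the paper outsources the ``fully faithful $\Leftrightarrow$ homological epimorphism'' step to \cite[Theorem 3.9(6)]{Pauk}, whereas you supply the standard counit-plus-generation argument yourself; both are fine.
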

\begin{proof}
By the correspondence between cohomologically locally constant sheaves and modules on the chain algebra of based loop spaces mentioned above, the  functor $f^{-1}:D_{\ground}(|Y|)\to D_{\ground}(|X|)$ is fully faithful if an only if the restriction functor $D(C_*(GY,\ground))\to D(C_*(GX,\ground))$ is fully faithful. The latter is equivalent by \cite[Theorem 3.9 (6)]{Pauk} to the map $C_*(GY,\ground))\to C_*(GX,\ground)$ being a homological epimorphism which is, by Theorem \ref{thm:acyclic}, is equivalent to $f:X\to Y$ being a $\ground$-acyclic map.
 
\end{proof}	
\section{Plus-construction and derived localization}
\subsection{Recollection on plus-construction, localization and completion of spaces} A $\ground$-plus-construction (called the \emph{partial $\ground$-completion} in \cite{BK}) of a connected space $X$ is a space $X^+_{\ground}$ supplied with a map $X\to X^+_{\ground}$ that is $\ground$-acyclic and is terminal among 
homotopy classes of $\ground$-acyclic maps out of $X$. The space $X^+_{\ground}$ is \emph{local} with respect to $\ground$-acyclic spaces, 
i.e. if $Y$ is $\ground$-acyclic, then any basepointed map $Y\to X^+_{\ground}$ is homotopic to the constant map; and the map $X\to X^+_{\ground}$ is initial among homotopy classes of maps from $X$ into local spaces.

A closely related notion is that of a \emph{localization} with respect to the homology theory $H\ground$; for a space $X$ its  $H\ground$ localization is a space $L_{H\ground}X$ supplied with a map $X\to L_{H\ground}X$ inducing an isomorphism in homology with coefficients in $\ground$ and terminal among homotopy classes of such maps (note that $H\ground$ localization is called \emph{semi-$\ground$-completion} in \cite{BK}). The space $L_{H\ground}X$ is \emph{local} with respect to 
$H\ground$-homology equivalences i.e. if $f:Y\to Z$ is an $H\ground$-homology equivalence then the induced map on homotopy classes 
$[Z,L_{H\ground}X]\to [Y,L_{H\ground}X]$ is an isomorphism. Furthermore, the map $X\to L_{H\ground}X$ is initial among homotopy classes of maps from $X$ into $L_{H\ground}$-local spaces. The space $L_{H\ground}X$ is `farther away' from $X$ than $X^+_{\ground}$ in the sense that the map $X\to L_{H\ground}X$ always factors through $X^+_{\ground}$; in favourable cases $X^+_{\ground}\simeq L_{H\ground}X$.
	
Finally, there is a notion of a $\ground$-completion $X\to X_\ground^{\wedge}$; the latter map always factors through $L_{H\ground}$. Often we have $L_{H\ground}X\simeq X_{\ground}^{\wedge}$, in this case $X$ is called \emph{${\ground}$-good}.

From now on, we will assume that $\ground=\mathbb{F}_p$, although all arguments and statements below are valid for an arbitrary field of characteristic $p$. We will write $X^+_p, X_p$ and $X^{\wedge}_p$ for $X^+_{\mathbb{F}_p}, L_{H\mathbb{F}_p}X$ and $X^{\wedge}_{\mathbb{F}_p}$ respectively.

There is a class of spaces $X$ for which $X^+_p$ is particularly well-behaved. Recall that
a group $G$ is \emph{$p$-perfect} if $H_1(G,\mathbb{F}_p)=0$. The maximal $p$-perfect subgroup of $G$ will be denoted by $\Pe(G)$; it is normal in $G$ and the quotient $G/\Pe(G)$ is $p$-\emph{hypoabelian}, i.e. it does not have nontrivial  $p$-perfect subgroups.
\begin{defi}
	A group $G$ is \emph{$p$-reasonable} if $G$ contains a $p$-perfect subgroup of finite index (equivalently, $G/\Pe(G)$ is a finite $p$-group).
	A connected space $X$ is $p$-reasonable if $\pi_1(X)$ is.
\end{defi}
 \begin{rem}
 Clearly if $G$ is $p$-reasonable then the augmentation ideal in the group ring $\mathbb{F}_p[G/\Pe(G)]$ is nilpotent.
 	Conversely, if $\mathbb{F}_p[G/\Pe(G)]$ has this property then $G/\Pe(G)$ is a finite $p$-group by \cite{Connell} and therefore $G$ is $p$-reasonable.
 \end{rem}
 
\begin{prop}\label{prop:reasonable}
Let $X$ be $p$-reasonable. Then: 
\begin{enumerate}\item $X$ is $p$-good (so that $X^{\wedge}_p\simeq X_p)$.
	\item $\pi_1(X^{\wedge}_p)\cong \pi_1(X)/\Pe(\pi_1(X))$.
	 \item There is a weak equivalence $X_p\simeq X^+_p$.
	 \end{enumerate}	
\end{prop}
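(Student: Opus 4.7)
The plan is to assemble all three parts from Bousfield--Kan theory, leveraging the preceding Remark that $p$-reasonability is equivalent to nilpotence of the augmentation ideal of $\mathbb{F}_p[\pi_1(X)/\Pe(\pi_1(X))]$.

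First, I would establish (2) together with the group-theoretic analogue by analyzing the Bousfield--Kan $\mathbb{F}_p$-completion tower $\{R_sX\}$ whose homotopy inverse limit is $X^{\wedge}_p$. Since $\pi_1$ of this tower is controlled by the $p$-descending central series of $\pi_1(X)$, and the hypothesis that $G := \pi_1(X)/\Pe(\pi_1(X))$ is a finite $p$-group implies that this series stabilizes at $G$ after finitely many steps, we conclude $\pi_1(X^{\wedge}_p) \cong G$.

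For (1), I would invoke Bousfield--Kan's sufficient criterion for $p$-goodness from \cite{BK}: it suffices that $\pi_1(X)$ be $p$-good as a group and that it act nilpotently on $H_*(\widetilde{X};\mathbb{F}_p)$. The first holds because the quotient map $\pi_1(X) \to G$ is an $\mathbb{F}_p$-homology isomorphism of groups (it is the plus-construction at the level of $K(\pi,1)$'s). The second is automatic since the action factors through the finite $p$-group $G$, which always acts nilpotently on any $\mathbb{F}_p$-vector space.

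Part (3) follows by universal properties once we know that $X^+_p$ is $H\mathbb{F}_p$-local. Note that $X^+_p$ is itself $p$-reasonable, since its fundamental group is already $G$, which has no nontrivial $p$-perfect subgroups. Applying parts (1) and (2) to $X^+_p$ yields that $X^+_p$ is $p$-good and that $(X^+_p)^{\wedge}_p$ has the same $\pi_1$ as $X^+_p$. The stronger statement that $X^+_p \to (X^+_p)^{\wedge}_p$ is a weak equivalence, i.e.\ $X^+_p$ is $p$-complete, would then be obtained from convergence of the Bousfield--Kan spectral sequence, which collapses because $G$ is finite. Being simultaneously $p$-good and $p$-complete, $X^+_p$ is $H\mathbb{F}_p$-local, so the $H\mathbb{F}_p$-equivalence $X \to X^+_p$ (induced by the acyclic map) factors through a weak equivalence $X_p \simeq X^+_p$ by the universal property of $H\mathbb{F}_p$-localization.

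The hard part is establishing $p$-completeness of $X^+_p$ in the argument for (3): one must verify that all higher homotopy groups of $X^+_p$ are $p$-complete, which reduces via the universal-cover fibration and the Bousfield--Kan spectral sequence to the finiteness of $G$ together with the nilpotent-action property already used in (1). The remaining parts are comparatively routine applications of \cite{BK}.
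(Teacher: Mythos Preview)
There is a genuine gap in your argument for (1). You claim that the action of $\pi_1(X)$ on $H_*(\widetilde{X};\mathbb{F}_p)$ factors through the quotient $G=\pi_1(X)/\Pe(\pi_1(X))$, but this is false in general: the $p$-perfect subgroup $\Pe(\pi_1(X))$ has no reason to act trivially on the homology of the universal cover. For a concrete counterexample, take a finite $p$-perfect group $H$ (e.g.\ $A_5$, for any $p\mid 60$), a nontrivial irreducible $\mathbb{F}_p[H]$-module $V$, and set $X=EH\times_H K(V,2)$. Then $\Pe(\pi_1(X))=H$ and $G=1$, yet $H$ acts nontrivially---and hence non-nilpotently---on $H_2(\widetilde{X};\mathbb{F}_p)\cong V$. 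Since your argument for (3) ultimately rests on this same ``nilpotent-action property'' (you invoke it again in the last paragraph), the gap propagates there as well, and the claimed $p$-completeness of $X^+_p$ is left unjustified.

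The paper avoids the universal cover entirely. Parts (1) and (2) are cited from \cite{AKO}; for (3) the argument uses instead the fibration $F\to X\to BG$ over the classifying space of the \emph{quotient} $G$. The key input is the Bousfield--Kan description of $X^+_p$ as the fibrewise $p$-completion of this map, giving a fibre sequence $F^{\wedge}_p\to X^+_p\to BG$. One then $p$-completes the original sequence $F\to X\to BG$ and invokes the mod-$p$ Fibre Lemma: this applies because it is $G$ (a finite $p$-group), not $\pi_1(X)$, that acts on $H_*(F;\mathbb{F}_p)$, and a finite $p$-group always acts nilpotently on an $\mathbb{F}_p$-vector space. Since $BG$ is already $p$-complete, comparing the two fibre sequences yields $X^+_p\simeq X^{\wedge}_p\simeq X_p$ directly. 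The moral: the correct cover to use is the one associated to $\Pe(\pi_1(X))$, not the universal cover; then nilpotence of the action is automatic for exactly the reason you gave, but applied to the right group acting on the right fibre.
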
	
\begin{proof}
Parts (1) and (2) of the required statement are proved \cite[Part III, Proposition 1.11]{AKO} and similar arguments also prove (3). Namely, consider the canonical map $f:X\to B[\pi_1(X)/\Pe(\pi_1(X)]$ induced by the quotient map $\pi_1(X)\to \pi_1(X)/\Pe(\pi_1(X))$ and recall from \cite[Chapter VII, 6.2]{BK} that $X^+_p$ is constructed as a fibrewise completion of the map $f$ (converted into a fibration). Denoting the homotopy fibre of $f$ by $F$ we have therefore a homotopy fibre sequence of spaces
\begin{equation}\label{eq:fibresequence}
F^{\wedge}_p\to X^+_p\to B[\pi_1(X)/\Pe(\pi_1(X)].
\end{equation}
Now apply the functor of $p$-completion to the homotopy fibre sequence $F\to X\to B[\pi_1(X)/\Pe(\pi_1(X))]$. We obtain 
\begin{equation}\label{eq:fibresequence'} F^{\wedge}_p\to X^{\wedge}_p\to B[\pi_1(X)/\Pe(\pi_1(X)]^{\wedge}_p
\end{equation} and this is also a homotopy fibre sequence by the $p$-mod Fibre Lemma of \cite[Chapter II, 5.1]{BK} and taking into account that the $p$-group $\pi_1(X)/\Pe(\pi_1(X))$ acts nilpotently  on $\tilde{H}_*(F,\mathbb{F}_p)$. 
There is a map from (\ref{eq:fibresequence}) to (\ref{eq:fibresequence'}) that is a weak equivalence on end terms (since $B[\pi_1(X)/\Pe(\pi_1(X))]$ is already $p$-complete) and it follows that it is a weak equivalence on the middle terms as required.
\end{proof}
\subsection{Derived localization of dg algebras} Let $A$ be a dg algebra and $s\in H_0(A)$ be a zero-dimensional cycle in $A$. In this situation one can construct \cite{BCL} another dg algebra $L_sA$ together with a dg algebra map $f:A\to L_s(A)$ such that $f(s)$ is invertible in $H_0(L_sA)$ and initial among the homotopy classes of maps out of $A$ with this property.
Moreover, the map $A\to L_sA$ can be interpreted as the Bousfield localization of $A$ as a (left) dg module over itself with respect to the map $r_s:A\to A; r_s(a)=as, a\in A$. It is also the \emph{nullification} of $A$ with respect to $A/s$, the cofibre of $r_s$, cf. \cite[4.10 and Proposition 4.11]{DG} regarding this result and terminology. There is a homotopy fibre sequence of left dg $A$-modules
\begin{equation}\label{eq:cellsequence}
L^sA\to A\to L_sA
\end{equation}
where $L^sA$ is the $s$-colocalization of $A$ (also known as $A/s$-cellularization and denoted by $\operatorname{Cell}_{A/s}(A)$ in \cite{DG}). This is a kind of a dualizing complex for left dg $A$ modules relative to $A/s$ and it has a nice interpretation in terms of classical homological algebra, cf. \cite[Section 4]{DG}:
\[
L^s(A)\simeq {\RHom}_{A}(A/s,A)\otimes^{\Le}_{\REnd_A(A/s,A/s)}A/s.
\]
The following example taken from \cite[Subsection 4.1]{DG} is instructive.
\begin{example}
	Let $A=\mathbb{Z}$, the integers and $s=p$ so $A/s\cong \mathbb{Z}/p$, the cyclic group of prime order $p$. Then $L_p\mathbb{Z}\simeq \mathbb{Z}[\frac{1}{p}]$, and from (\ref{eq:cellsequence}) we obtain $L^p(\mathbb{Z})\simeq\Sigma^{-1}\big(\mathbb{Z}[\frac{1}{p}]/\mathbb{Z}\big)=:\Sigma^{-1}\mathbb{Z}/p^{\infty}$.
\end{example}
Assume that $s=e$ is an idempotent in $H_0(A)$; then $L_eA$ is quasi-isomorphic as a left $A$-module to the Bousfield localization of $A$ with respect to the localizing subcategory generated by the left dg $A$-module $A(1-e)$ (since $A(1-e)$ and $A/e\simeq A(1-e)\oplus \Sigma A(1-e)$ generate the same localizing subcategory). Therefore, by \cite{DG} there is a  quasi-isomorphism of left dg $A$-modules:
\begin{align}\label{eq:cellularization}\begin{split}
L^e(A)\simeq& {\RHom}_{A}(A(1-e),A)\otimes^{\Le}_{\REnd_A(A(1-e),A(1-e))}A(1-e)\\ \simeq& A(1-e)\otimes^{\Le}_{(1-e)A(1-e)}(1-e)A.
\end{split}
\end{align}
The $A$-bimodule $I:=A(1-e)\otimes^{\Le}_{(1-e)A(1-e)}(1-e)A$ can be viewed as a `derived two-sided ideal' generated by the idempotent $1-e$ in $A$ in the sense that the homotopy cofibre $A/I$ (`derived quotient') of $A$ by $I$ is quasi-isomorphic to the derived localization $L_eA$. This resembles quotienting out by a two-sided ideal generated by an idempotent in a nonderived context.
\subsection{Algebraic description of the loop space of a $p$-plus-construction}Let $X$ be a connected space; below we will write $GX^+_p$ and $GX^{\wedge}_p$ for $G(X^+_p)$ and $G(X^{\wedge}_p)$ respectively. It follows from Theorem \ref{thm:acyclic} that the map of dg algebras $C_*(GX, \mathbb{F}_p)\to C_*(GX^+_p,\mathbb{F}_p)$ is a homology epimorphism and so, it is natural to ask for an algebraic description of this map.  We will give such a description for a $p$-reasonable space; the result is particularly pleasant when $\pi_1(X)$ is a finite group.

Set $A:=C_*(GX,\mathbb{F}_p)$. There is a canonical map $C_*(GX,\mathbb{F}_p)\to H_0(GX,\mathbb{F}_p)\cong \mathbb{F}_p[\pi_1(X)]$ in the homotopy category of dg $\mathbb{F}_p$-algebras, and, since $\mathbb{F}_p[\pi_1(X)]$ is augmented, so is $C_*(GX,\mathbb{F}_p)$.  Consider the homology functor $H\mathbb{F}_p:M\mapsto \mathbb{F}_p\otimes_A^{\Le}M$ 
on the category of dg $A$-modules. The notation $H\mathbb{F}_p$ is designed to invoke an analogy with the homology functor in the stable homotopy category given by smashing with the mod-$p$ Eilenberg MacLane spectrum (in fact, this is not merely an analogy since the category of dg algebras is Quillen equivalent to the category of algebras over the integral Eilenberg-MacLane spectrum $H\mathbb{Z}$, cf. \cite{Shipley}).   We will denote by $\BL{M}$ the Bousfield localization of an $A$-module $M$ with respect 
to this homology functor. Thus, we have a canonical map $M\to \BL{M}$ that is an 
$H\mathbb{F}_p$-equivalence (i.e. induces a quasi-isomorphism upon applying $H{\mathbb{F}_p}$) and such that $\BL{M}$ is $H\mathbb{F}_p$-local (i.e. it does not admit homotopy nontrivial maps from $H\mathbb{F}_p$-acyclic modules).
\begin{theorem}\label{thm:localizationkernel} Let $X$ be a connected space. Then:\begin{enumerate}
\item If $X$ is $p$-reasonable then $C_*(GX^+_p,\mathbb{F}_p)$ is quasi-isomorphic to 
$\BL{C_*(GX,\mathbb{F}_p)}$ and the canonical map 
$C_*(GX,\mathbb{F}_p)\to C(GX^+_p,\mathbb{F}_p)$ 
is the $H\mathbb{F}_p$ localization map on the category of left dg $C_*(GX,\mathbb{F}_p)$-modules.
\item If $\pi_1(X)$ is a finite group then there exists an idempotent $e$ in $\mathbb{F}_p[\pi_1(X)]$ such that the dg algebra $C_*(GX^+_p,\mathbb{F}_p)$ is quasi-isomorphic to the derived localization $L_eC_*(GX,\mathbb{F}_p)$ and $C_*(GX,\mathbb{F}_p)\to C_*(GX^+_p,\mathbb{F}_p)$ is the $e$-localization map. 
\end{enumerate}
\end{theorem}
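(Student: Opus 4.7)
The plan is to identify the canonical map $A\to B$ (where $A:=C_*(GX,\mathbb{F}_p)$ and $B:=C_*(GX^+_p,\mathbb{F}_p)$) with the $H\mathbb{F}_p$-Bousfield localization of $A$ as a left $A$-module, and to specialise this identification to a derived idempotent localization when $\pi_1(X)$ is finite.

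For part (1), I would verify the two defining properties of Bousfield localization. First, to see that $A\to B$ is an $H\mathbb{F}_p$-equivalence, apply the standard Rothenberg--Steenrod identification: for a map of pointed connected spaces $X\to Y$ with homotopy fibre $F$, the two-sided bar construction yields $\mathbb{F}_p\otimes^{\Le}_{C_*(GX,\mathbb{F}_p)}C_*(GY,\mathbb{F}_p)\simeq C_*(F,\mathbb{F}_p)$. Taking $Y=X^+_p$, this fibre is $\mathbb{F}_p$-acyclic by the defining property of the plus construction, so $\mathbb{F}_p\otimes^{\Le}_A B\simeq \mathbb{F}_p$. Second, to show that $B$ is $H\mathbb{F}_p$-local as an $A$-module, use the extension-of-scalars adjunction $\RHom_A(M,B)\simeq \RHom_B(B\otimes^{\Le}_A M,B)$; since $\mathbb{F}_p\otimes^{\Le}_B(B\otimes^{\Le}_A M)\simeq \mathbb{F}_p\otimes^{\Le}_A M$, the $B$-module $B\otimes^{\Le}_A M$ is $H\mathbb{F}_p$-acyclic over $B$ whenever $M$ is $H\mathbb{F}_p$-acyclic over $A$, so it suffices to show $B$ is $H\mathbb{F}_p$-local as a $B$-module. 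Here the $p$-reasonableness of $X$ enters decisively via Proposition \ref{prop:reasonable}: $H_0(B)=\mathbb{F}_p[Q]$ is a local Artinian ring with nilpotent augmentation ideal and residue field $\mathbb{F}_p$, which combined with the $\mathbb{F}_p$-completeness of $X^+_p$ should yield a Nakayama-style vanishing of $\RHom_B(N,B)$ for $H\mathbb{F}_p$-acyclic $N$.

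For part (2), when $G:=\pi_1(X)$ is finite the algebra $\mathbb{F}_p[G]=H_0(A)$ is Artinian and admits a block decomposition. I would take $e\in H_0(A)$ to be the primitive central idempotent of the principal block, i.e.\ the unique central idempotent acting as the identity on the trivial $G$-module $\mathbb{F}_p$. By the discussion preceding the theorem, $L_eA$ is the Bousfield localization nullifying the $A$-module $A(1-e)$. Since the trivial module lies entirely in the principal block, $\mathbb{F}_p\otimes^{\Le}_A A(1-e)\simeq 0$, so $A(1-e)$ is $H\mathbb{F}_p$-acyclic. Conversely, lifting the block decomposition $H_0(A)=eH_0(A)\oplus(1-e)H_0(A)$ to a splitting in $D(A)$, one sees that every $H\mathbb{F}_p$-acyclic $A$-module lies in the localizing subcategory generated by $A(1-e)$. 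Hence the smashing localizations $\BL{A}$ and $L_eA$ coincide, and combining with part (1) yields $C_*(GX^+_p,\mathbb{F}_p)\simeq L_eA$.

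The main obstacle I anticipate is in part (1), the $H\mathbb{F}_p$-locality of $B$ as a $B$-module. A classical Nakayama argument handles bounded-below complexes easily, but Bousfield localization demands the vanishing of $\RHom_B(N,B)$ for \emph{arbitrary} $H\mathbb{F}_p$-acyclic $N\in D(B)$, including unbounded modules. I expect one must exploit the $\mathbb{F}_p$-completeness of $X^+_p$ in a global manner---for instance, by realising $B$ as a derived inverse limit of quotients by powers of its augmentation ideal (mirroring the Bousfield--Kan $\mathbb{F}_p$-completion of $X^+_p$), and then invoking a completion/nullification adjunction rather than a purely degree-wise argument.
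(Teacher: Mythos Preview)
Your plan for part~(1)(a) is correct and takes a different route from the paper: instead of invoking the Rothenberg--Steenrod identification of $\mathbb{F}_p\otimes^{\Le}_A B$ with chains on the homotopy fibre, the paper uses that $A\to B$ is a homological epimorphism (Theorem~\ref{thm:acyclic}), so the fibre $M$ satisfies $M\otimes^{\Le}_A B\simeq 0$, and then observes that the $A$-augmentation factors through $B$ to conclude $\mathbb{F}_p\otimes^{\Le}_A M\simeq \mathbb{F}_p\otimes^{\Le}_{B}(B\otimes^{\Le}_A M)\simeq 0$. Both arguments are short; yours avoids invoking the earlier theorem, while the paper's keeps everything internal to module theory.

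For part~(1)(b) you have correctly isolated the key ingredient---that $H_0(B)=\mathbb{F}_p[\pi_1(X^+_p)]$ is local Artinian with residue field $\mathbb{F}_p$---but you then over-anticipate the difficulty. No appeal to $\mathbb{F}_p$-completeness of $X^+_p$ or to derived $I$-adic completion is needed. The paper argues as follows: any $A$-module on which $A$ acts through the augmentation $A\to\mathbb{F}_p$ is $H\mathbb{F}_p$-local by adjunction; hence any $H_0(B)$-module is local, since the finite filtration by powers of the maximal ideal reduces to the previous case; finally $B$, being concentrated in non-negative degrees, is the homotopy inverse limit of its Postnikov tower, whose successive fibres are shifts of $H_0(B)$-modules. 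Since local objects are closed under homotopy limits, $B$ is local. The point is that one never needs to control an arbitrary unbounded acyclic $N$ directly; one exhibits $B$ as a homotopy limit of manifestly local pieces.

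Part~(2) contains a genuine error in the choice of idempotent. The primitive \emph{central} idempotent of the principal block acts as the identity on \emph{every} simple module in that block, not only on $\mathbb{F}_p$. With your choice, $H_0(L_eA)$ is the entire principal block of $\mathbb{F}_p[G]$, which in general is not local, and $L_eA$ is then not $H\mathbb{F}_p$-local: any projective indecomposable in the principal block other than the cover of $\mathbb{F}_p$ is $H\mathbb{F}_p$-acyclic yet maps nontrivially into $L_eA$. Correspondingly your ``conversely'' claim fails. The paper instead takes a \emph{primitive} (not central) idempotent $e$ acting as the identity on $\mathbb{F}_p$ and as zero on all other simples; then $H_0(L_eA)=\mathbb{F}_p[G]/(1-e)$ has $\mathbb{F}_p$ as its unique simple and is local Artinian, so the Postnikov argument from part~(1) applies verbatim to show $L_eA$ is $H\mathbb{F}_p$-local. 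The paper does not try to identify the $H\mathbb{F}_p$-acyclics with the localizing subcategory generated by $A(1-e)$ directly; it simply checks the two defining properties of $H\mathbb{F}_p$-localization for $A\to L_eA$ and appeals to part~(1).
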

\begin{proof}
	Let $X$ be $p$-reasonable, $A:=C_*(GX,\mathbb{F}_p)$ and $LA:=C_*(GX^+_p,\mathbb{F}_p)$. To show that $LA\simeq \BL{A}$ we need to show that \begin{enumerate}
		\item[(a)] 
	the map $A\to LA$ is an $H\mathbb{F}_p$-equivalence and\item[(b)] 
	 that $LA$ is $H\mathbb{F}_p$-local.
	\end{enumerate} 
Denote by $M$ the homotopy fibre of $A\to LA$; then $M\otimes^{\Le}_A LA\simeq 0$ and so
\[
\mathbb{F}_p\otimes^{\Le}_AM\simeq \mathbb{F}_p\otimes^{\Le}_ALA\otimes_{LA}^{\Le}M\simeq 0,
\]	
which implies (a). Next, note that any dg $A$-module whose $A$-action is through the augmentation map $A\to \mathbb{F}_p$, is $H\mathbb{F}_p$-local. Indeed if $Y$ is such an $A$-module and $N$ is  $H\mathbb{F}_p$-acyclic (i.e. $\mathbb{F}_p\otimes^{\Le}_AN\simeq 0$) then
\[
{\RHom}_{A}(N,Y)\simeq{\RHom}_{\mathbb{F}_p}(\mathbb{F}_p\otimes^{\Le}_AN,Y)\simeq 0.
\]	

Since $X$ is $p$-reasonable, $H_0(LA)\cong\mathbb{F}_p[\pi_1(X^+_p)]$ is the $\mathbb{F}_p$-group ring of a finite $p$-group and is, thus, a local $\mathbb{F}_p$-algebra with a finite-dimensional nilpotent maximal ideal. Any dg $H_0(LA)$-module has a finite filtration induced by the powers of the maximal ideal in $H_0(LA)$ with associated graded quotients being $\mathbb{F}_p$-modules, which implies that any such module is $H\mathbb{F}_p$-local. The dg $A$-module $LA$ can be represented as a homotopy inverse limit of its Postnikov tower $\{LA[n],n=0,1,\ldots\}$ (so that $LA[n]$ has the same homology as $LA$ up to and including degree $n$ and zero in higher degrees). The homotopy cofibres $LA[n]/LA[n+1]$ are isomorphic as objects in $D(A)$ to dg $H_0(LA)$-modules and thus, are $H\mathbb{F}_p$-local. This implies that $LA$ is $H\mathbb{F}_p$-local and part (1) is therefore proved.

For part (2)  let $e$ be a primitive idempotent
of $\mathbb{F}_p[\pi_1(X)]$ which acts as the identity on the trivial module of $\mathbb{F}_p[\pi_1(X)]$ and by 0 on other
simple modules; it is unique up to conjugation by a unit of $\mathbb{F}_p[\pi_1(X)]$. Then the localization map $f:A\to L_eA$ is easily seen to be the $H\mathbb{F}_p$-localization so the statement follows by part (1). Indeed, the homotopy fibre $A^e$ of $f$ has the property that $L_eA^e\simeq 0$ but then \begin{align*}\mathbb{F}_p\otimes^{\Le}_AA^e&\simeq \mathbb{F}_p\otimes^{\Le}_AL_eA\otimes_{A}A^e\\
&\simeq \mathbb{F}_p\otimes_A^{\Le} L_eA^e\simeq 0\end{align*} so $A^e$ is $H\mathbb{F}_p$-acyclic and $f$ is an $H\mathbb{F}_p$-local equivalence. Also $H_0(L_eA)$ is a local ring with residue field $\mathbb{F}_p$ so $L_eA$ is $H\mathbb{F}_p$-local.
\end{proof}
The following is a straightforward consequence of Theorem \ref{thm:localizationkernel} (1).
\begin{cor}\label{cor:Fploc}
Let $X$ be a $p$-reasonable space.	The homotopy epimorphism of dg algebras $C_*(GX,\mathbb{F}_p)\to C_*(GX^{\wedge}_p,\mathbb{F}_p)$ induces a smashing localization functor on the derived categories \[D(C_*(GX,\mathbb{F}_p))\to D(C_*(GX^{\wedge}_p,\mathbb{F}_p))\] that coincides with $H\mathbb{F}_p$-localization on the  \emph{perfect} subcategory of $ D(C_*(GX^{\wedge}_p,\mathbb{F}_p))$
\end{cor}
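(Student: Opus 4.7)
The plan is to reduce the statement to Theorem \ref{thm:localizationkernel}(1) via Proposition \ref{prop:reasonable}(3), and then to use a standard thick subcategory argument. First, Proposition \ref{prop:reasonable}(3) identifies $X^{\wedge}_p$ with $X^+_p$ for a $p$-reasonable space $X$, so $C_*(GX^{\wedge}_p,\mathbb{F}_p)\simeq C_*(GX^+_p,\mathbb{F}_p)$ as dg algebras. Setting $A:=C_*(GX,\mathbb{F}_p)$ and $LA:=C_*(GX^{\wedge}_p,\mathbb{F}_p)$, Theorem \ref{thm:acyclic} applied to the $\mathbb{F}_p$-acyclic map $X\to X^+_p$ gives that $A\to LA$ is a homological epimorphism. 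Invoking \cite[Theorem 3.9]{Pauk}, this immediately yields the first half of the statement: the restriction functor $D(LA)\to D(A)$ is fully faithful and its left adjoint $M\mapsto M\otimes^{\Le}_A LA$ exhibits $D(LA)$ as a smashing Bousfield localization of $D(A)$.

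For the comparison with $H\mathbb{F}_p$-localization I would invoke Theorem \ref{thm:localizationkernel}(1), which identifies the canonical map $A\to LA$ with the $H\mathbb{F}_p$-localization map $A\to \BL{A}$ of $A$ viewed as a dg module over itself. Consequently, the smashing localization functor $\ell:M\mapsto M\otimes^{\Le}_A LA$ (viewed as an endofunctor of $D(A)$ through restriction) and the $H\mathbb{F}_p$-localization $M\mapsto \BL{M}$ agree on $A$, together with their unit maps out of $A$.

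Finally I would upgrade agreement on $A$ to agreement on all perfect modules by a standard thick subcategory argument: both $\ell$ and $\BL{(-)}$ are exact functors of triangulated categories preserving finite direct sums, so the full subcategory of $D(A)$ on which the canonical comparison natural transformation $\BL{M}\to \ell(M)$ is a quasi-isomorphism is thick; since it contains the compact generator $A$, it contains the whole perfect subcategory $\Perf(A)\subset D(A)$, yielding the required coincidence. The main obstacle I foresee is cosmetic rather than substantive: one has to produce the comparison natural transformation $\BL{(-)}\to \ell(-)$ (it comes from the universal property of $H\mathbb{F}_p$-localization, since $\ell(A)\simeq LA$ is $H\mathbb{F}_p$-local by Theorem \ref{thm:localizationkernel}(1)) and check that it is a quasi-isomorphism on $A$; once this is in hand the thick subcategory principle finishes the argument.
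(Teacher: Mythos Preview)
Your argument is correct and matches the paper's approach: the paper records this corollary as a ``straightforward consequence of Theorem \ref{thm:localizationkernel}(1)'' with no further proof, and your write-up simply unpacks that claim via Theorem \ref{thm:acyclic}, \cite[Theorem 3.9]{Pauk}, and a thick subcategory argument. One small point: to identify $X^{\wedge}_p$ with $X^+_p$ you need both parts (1) and (3) of Proposition \ref{prop:reasonable} (part (3) alone only gives $X_p\simeq X^+_p$), but this is cosmetic.
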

\noproof
\begin{rem}
	Note that the  $H\mathbb{F}_p$-localization functor on the full derived category of $C_*(GX,\mathbb{F}_p)$ is not necessarily smashing, even if it is so on the perfect subcategory.
\end{rem}	
\begin{cor}\label{cor:benson}
	Let $X$ be a connected space with $\pi_1(X)$ finite and denote by $e$ an idempotent in $\mathbb{F}_p[\pi_1(X)]$ acting as the identity on the trivial $\pi_1(X)$-module and zero on other simple $\pi_1(X)$-modules. Then there is a homotopy cofibre sequence of dg $C_*(GX, \mathbb{F}_p)$-modules
	\[
	C_*(GX, \mathbb{F}_p)(1-e)\otimes^{\Le}_{(1-e)C_*(GX, \mathbb{F}_p)(1-e)}(1-e)C_*(GX, \mathbb{F}_p)\to C_*(GX, \mathbb{F}_p)\to C_*(GX^+_p, \mathbb{F}_p)
	\]
\end{cor}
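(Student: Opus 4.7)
My plan is to view this corollary as an immediate translation of Theorem \ref{thm:localizationkernel}(2) via the general cofibre sequence for derived localization at an idempotent recalled in the preceding subsection. Write $A = C_*(GX, \mathbb{F}_p)$. The first step is to invoke Theorem \ref{thm:localizationkernel}(2): since $\pi_1(X)$ is finite, there is a quasi-isomorphism $C_*(GX^+_p, \mathbb{F}_p) \simeq L_e A$ under which the canonical map $A \to C_*(GX^+_p, \mathbb{F}_p)$ is identified with the derived localization map $A \to L_e A$. Thus it suffices to produce a homotopy cofibre sequence of dg $A$-modules
\[
A(1-e) \otimes^{\Le}_{(1-e)A(1-e)} (1-e)A \to A \to L_e A.
\]

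The second step is to appeal to the general homotopy fibre sequence (\ref{eq:cellsequence}), which reads $L^e A \to A \to L_e A$ and is at the same time a homotopy cofibre sequence. Then I would use formula (\ref{eq:cellularization}), which identifies the $e$-colocalization with the desired expression
\[
L^e A \simeq A(1-e) \otimes^{\Le}_{(1-e)A(1-e)} (1-e)A,
\]
using that $A(1-e)$ and $A/e \simeq A(1-e)\oplus \Sigma A(1-e)$ generate the same localizing subcategory. Substituting this into the cofibre sequence above yields exactly the statement of the corollary.

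There is essentially no genuine obstacle: the content is entirely packaged in the preceding subsection and in Theorem \ref{thm:localizationkernel}(2). The only minor compatibility to observe is that the idempotent $e$ specified in the corollary — namely the primitive idempotent acting as the identity on the trivial $\pi_1(X)$-module and zero on other simples — is exactly the one used in the proof of Theorem \ref{thm:localizationkernel}(2), so the two $L_e A$'s agree. With that matching in place, the corollary follows by direct substitution and needs no further calculation.
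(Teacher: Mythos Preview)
Your proposal is correct and matches the paper's own proof essentially verbatim: the paper simply says the corollary follows from Theorem \ref{thm:localizationkernel}(2) together with (\ref{eq:cellsequence}) and (\ref{eq:cellularization}), which is exactly the chain of substitutions you spell out. Your additional remark checking that the idempotent in the corollary coincides with the one used in the proof of Theorem \ref{thm:localizationkernel}(2) is a helpful clarification but not a departure from the paper's argument.
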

\begin{proof}
This follows directly from Theorem \ref{thm:localizationkernel} (2), taking into account (\ref{eq:cellsequence}) and (\ref{eq:cellularization}).
\end{proof}	
\begin{rem}
Let us consider the case $X=BH$ where $H$ is a finite group; the homotopy theory of the loop space on $BH^{\wedge}_p\simeq BH^+_p$ has been studied extensively, see the review paper \cite{CoL} and the more recent \cite{Ben}. From Corollary \ref{cor:benson} we obtain a homotopy fibre sequence
\begin{equation}\label{eq:torsequence}
\mathbb{F}_p[H](1-e)\otimes^{\Le}_{(1-e)\mathbb{F}_p[H](1-e)}(1-e)\mathbb{F}_p[H]\to \mathbb{F}_p[H]\to C_*(G(BH^+_p), \mathbb{F}_p).
\end{equation}
Next, taking the homology long exact sequence of (\ref{eq:torsequence}) we obtain, for $n>1$:
\[
H_n(G(BH^+_p),\mathbb{F}_p)\cong {\Tor}_{n-1}^{(1-e)\mathbb{F}_p[H](1-e)}\big(\mathbb{F}_p[H](1-e),(1-e)\mathbb{F}_p[H]\big)
\]
while for $n=1$ we have an exact sequence
\[
0\to H_1(G(BH^+_p),\mathbb{F}_p)\to 
\mathbb{F}_p[H](1-e)\otimes_{(1-e)\mathbb{F}_p[H](1-e)}(1-e)\mathbb{F}_p[H]\to 
\mathbb{F}_p[H]\to 
\mathbb{F}_p[H/\Pe(H)]\to 0
\]
which is the main result of \cite{Ben} obtained by different methods.
\end{rem}

Since the localization functor $L:D(C_*(GX, \mathbb{F}_p))\to D(C_*(GX^+_p, \mathbb{F}_p))$ is always smashing, it makes sense to ask whether it is \emph{finite}, i.e. whether the localizing subcategory $\Ker(L)\subseteq D(C_*(GX, \mathbb{F}_p))$  is compactly generated. Theorem \ref{thm:localizationkernel} (2) tells us that it is the case when $\pi_1(X)$ is finite (indeed in this case it is even a derived localization at a single element). It turns out that $\Ker L$ may not be compactly generated, even when $X$ is a classifying space of an abelian group. To see that, let us note first the following useful derived analogue of Nakayama's lemma; here $\Perf(A)$ stands for the perfect derived category of a ring $A$.
\begin{lem}\label{lem:nakayama} Let $A$ be an ordinary ring and let $S=A/J$ be the quotient by its Jacobson radical $J$. Then	the functor $S\otimes^{\Le}_A-:\Perf(A)\to\Perf(S)$ has the zero kernel.
\end{lem}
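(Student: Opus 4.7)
The plan is to read off the lowest-degree homology of $S \otimes_A^{\Le} P$ via an edge isomorphism in a hyper-Tor spectral sequence, and then apply the classical Nakayama lemma. First I would represent a nonzero $P \in \Perf(A)$ by a bounded complex of finitely generated projective $A$-modules, so that each $H_q(P)$ is a finitely generated $A$-module and all but finitely many vanish. Let $m$ be the smallest integer with $H_m(P) \neq 0$.

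Next I would invoke the hyper-Tor spectral sequence
\[
E^2_{p, q} \;=\; \Tor^A_p(S, H_q(P)) \;\Longrightarrow\; \Tor^A_{p+q}(S, P),
\]
with differentials $d_r : E^r_{p, q} \to E^r_{p - r,\, q + r - 1}$. By minimality of $m$, this $E^2$-page is supported in $\{p \geq 0,\; q \geq m\}$. At the position $(0, m)$, outgoing $d_r$ lands in $(-r, m + r - 1)$ with negative $p$ and hence vanishes, while incoming $d_r$ originates from $(r, m - r + 1)$ where $q = m - r + 1 < m$ for $r \geq 2$, so $E^2$ vanishes there by minimality of $m$. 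Therefore $E^\infty_{0, m} = E^2_{0, m} = S \otimes_A H_m(P)$. Moreover, the only bidegree contributing to the degree-$m$ abutment must satisfy $p + q = m$, $p \geq 0$, $q \geq m$, which forces $p = 0$ and $q = m$; thus $\Tor^A_m(S, P) \cong S \otimes_A H_m(P) = H_m(P) / J H_m(P)$.

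Finally, if $S \otimes^{\Le}_A P \simeq 0$ then $\Tor^A_m(S, P) = 0$, so $H_m(P) = J H_m(P)$; Nakayama's lemma, applied to the finitely generated $A$-module $H_m(P)$ with $J$ the Jacobson radical, forces $H_m(P) = 0$, contradicting the choice of $m$. Hence $P \simeq 0$, as required.

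The conceptual hinge — and what I expect to be the only subtle point of the writeup — is to work with the \emph{bottom} nonzero homology rather than the top. The mirror-image attempt at the top-most $H_n(P)$ gives only that $E^\infty_{0, n}$ is a \emph{quotient} of $E^2_{0, n} = S \otimes_A H_n(P)$, since incoming differentials from $(r, n - r + 1)$ with $r \geq 2$ are not forced to vanish; that is too weak to invoke Nakayama. At the bottom corner the analogous source positions lie in the vanishing region $q < m$, so the edge isomorphism $E^\infty_{0, m} = E^2_{0, m}$ is genuine and the argument goes through.
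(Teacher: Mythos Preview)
Your argument is essentially sound, and it supplies what the paper does not: the paper's own proof is nothing more than a citation to \cite[Lemma~5.3]{Kra}. One point, however, needs care. The blanket claim that ``each $H_q(P)$ is a finitely generated $A$-module'' is unjustified for a general (non-Noetherian) ring $A$, since a submodule of a finitely generated module need not be finitely generated, and the statement imposes no Noetherian hypothesis. Fortunately you only ever use finite generation of the \emph{bottom} homology $H_m(P)$, and that can be arranged: because $H_i(P)=0$ for $i<m$ and each $P_i$ is finitely generated projective, you may split off the acyclic tail inductively and replace $P$ by a homotopy equivalent bounded complex of finitely generated projectives concentrated in degrees $\geq m$. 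Then $H_m(P)=\operatorname{coker}(d_{m+1})$ is a quotient of $P_m$, hence finitely generated, and Nakayama applies.

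With that truncation in hand the hyper-Tor spectral sequence is in fact unnecessary: since the truncated $P$ is still a complex of projectives, $S\otimes^{\Le}_A P\simeq S\otimes_A P$, and right-exactness of $S\otimes_A-$ gives $H_m(S\otimes_A P)\cong S\otimes_A H_m(P)$ directly. Your closing observation about the asymmetry between top and bottom homology is exactly the point, and it is also what makes the truncation-from-below work while a naive top-degree argument would not.
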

\begin{proof}
This is \cite[Lemma 5.3]{Kra}.
\end{proof}
	
\begin{example}
Let $G$ be a locally finite $p$-group; then  $\mathbb{F}_p[G]$ is local by \cite{Nik} and so, its augmentation ideal coincides with the Jacobson radical. Assume additionally, that $G$ is divisible; e.g.  $G=\mathbb{Z}/p^{\infty}\cong\bigcup_{n=1}^\infty \mathbb{Z}/p^{n}$.  It follows from divisibility of $G$ that $H_1(G,\mathbb{F}_p)=0$ and so, $G$ is $p$-perfect. So, by Theorem \ref{thm:localizationkernel} (1) we have the smashing localization functor $L:D(\mathbb{F}_p[G])\to D(L_{H\mathbb{F}_p}\mathbb{F}_p[G])$; moreover $L$ coincides with the $H\mathbb{F}_p$-localization when restricted to perfect dg $\mathbb{F}_p[G]$-modules by Corollary \ref{cor:Fploc}. Thus, a perfect dg $\mathbb{F}_p[G]$-module $M$ is in $\Ker L$ if and only if  $H_*(\mathbb{F}_p\otimes^{\Le}_{ \mathbb{F}_p[G]}M)=0$ and by Lemma \ref{lem:nakayama}, $M\simeq 0$.
\end{example}
\begin{rem}
	The above example is a modification of Keller's counterexample to the Telescope Conjecture \cite{Kel}. 
\end{rem}
\begin{example}
Let $p,q$ be prime numbers such that $q$ divides $p-1$. Then $\mathbb{Z}/q$ acts faithfully on $\mathbb{Z}/p$ and we can form the semidirect product
$\mathbb{Z}/p\rtimes\mathbb{Z}/q$, clearly it is $p$-perfect. Let $X:=B(\mathbb{Z}/p\rtimes\mathbb{Z}/q)$ An easy calculation with the Serre-Hochschild spectral sequence associated with the normal subgroup $\mathbb{Z}/p$ of $\mathbb{Z}/p\rtimes\mathbb{Z}/q$  shows that there is an isomorphism of graded algebras \[H^*(X^+_p,\mathbb{F}_p)\cong H^*(\mathbb{Z}/p\rtimes\mathbb{Z}/q, \mathbb{F}_p)\cong \mathbb{F}_p[x^\prime, y^\prime]\] with $x^\prime,y^\prime$ situated in cohomological degrees $2q-1$ and $2q$ respectively. Since $X^+_p$ is simply-connected, this implies that its lowest homotopy group is $\pi_{2q-1}X^+_p\cong \mathbb{Z}/p$. It follows that there exists a homotopy fibre sequence
\[
(S^{2q-1})^{\wedge}_p\xrightarrow{j} (S^{2q-1})^{\wedge}_p\rightarrow X^+_p
\]
where $j$ is the self-map of the $p$-completed $2q-1$-sphere $(S^{2q-1})^{\wedge}_p$ of degree $2q-1$ (this argument is presented in \cite[Chapter VII, Proposition 4.4]{BK} for $p=3,q=2$). Thus, $GX^+_p$ is the homotopy fibre of $j$, in particular the latter is a loop space. This is a well-known fact and it is also known that this loop space structure is unique, \cite[Corollary 1.2]{BL}.

Now let $p>3$ or $q>2$. The homology of $GX^+_p$ can be computed with the help of the cobar spectral sequence whose $E_2$ term is
$\operatorname{Ext}_{H^*(X^+_p,\mathbb{F}_p)}(\mathbb{F}_p,\mathbb{F}_p)\cong \mathbb{F}_p[x,y]$ with $|x|=2q-2$ and $|y|=2q-1$.  Taking into account the Hopf algebra structure on this spectral sequence with $x,y$ primitive, we conclude that it collapses and there is an isomorphism of graded algebras 
$H_*(GX^+_p,\mathbb{F}_p)\cong \mathbb{F}_p[x,y]$.  Note that the dg algebra $C_*(GX^+_p,\mathbb{F}_p)$ is \emph{not} formal 
i.e. it is not quasi-isomorphic to its homology. Indeed, if it were, then $C^*(X^+_p,\mathbb{F}_p)$ would likewise be formal and quasi-isomorphic to $H^*(X^+_p,\mathbb{F}_p)\cong\mathbb{F}_p[x^\prime, y^\prime]$, however such a result is incompatible with the minimal model of $C^*(\mathbb{Z}/p,\mathbb{F}_p)$ computed in \cite{Mad}.
	
The case $p=3,q=3$ requires a separate treatment since the Hopf algebra structure on the cobar spectral sequence for $GX^+_p$ is insufficient for deducing the multiplicative structure on $H_*(GX^+_p,\mathbb{F}_p)$. Note also that in this case $X\cong BS_3$, the classifying space of the symmetric group on three letters; an idempotent in $\mathbb{F}_3[S_3]$ figuring in the formulation of Theorem \ref{thm:localizationkernel} can be found explicitly; some possible choices are $-(12)-1, -(13)-1$ and $-(23)-1$. Let $e$ be any one of these.  Then the dg algebra $L_e\mathbb{F}_3[S_3]\simeq C_*(GX^+_3,\mathbb{F}_3)$ is formal and quasi-isomorphic to the graded  algebra (with zero differential) generated over $\mathbb{F}_3$ by two indeterminates $x,y$ with $|x|=2$ and $|y|=3$ subject to the relation $xy=yx$ and $x^3=y^2$ (note that this is \emph{not} a graded commutative algebra, although its associated graded with respect to the powers of the maximal ideal $(x,y)$ is isomorphic to the graded commutative polynomial algebra $\mathbb{F}_3[x,y]$).

To obtain this result, we will realise $\BL{\mathbb{F}_3[S_3]}$ as a `squeezed resolution', following Benson \cite{Ben}.
Let $\alpha:Ae \to A(1-e)$ and $\beta:A(1-e)\to Ae$  be nonzero homomorphisms; both are unique up to multiplication by a nonzero scalar. Then the dg $A$-module $P=(P_i,d_i)$ given by
$$
P_i = \begin{cases} A(1-e) & \text{if } i>0 \\ Ae & \text{if } i=0  \\ 0 & \text{if } i<0 \end{cases},
\qquad
d_i = \begin{cases} \alpha\circ\beta & \text{if } i>1 \\ \beta & \text{if } i=1  \\ 0 & \text{if } i<1 \end{cases}
$$
is a left $\mathbb{F}_3$-squeezed resolution for $S_3$, in the sense of \cite[Definition 3.2]{Ben}. The chain maps $z(n):P\to\Sigma^{-n} P$ 
given by
$$
z(n)_i = \begin{cases} \Sigma^{-n} & \text{if } i>0 \\ \Sigma^{-n}\circ\alpha & \text{if } i=0  \\ 0 & \text{if } i<n \end{cases}
$$
for $n\geq 2$, together with the identity map on $P$, descend to a basis of $\Hom^\ast_{D(A)}(P,P)\cong \Hom^\ast_{D(A)}(A,P)\cong H(P)$.
Since 
 $z(m)\circ z(n)= z(m+n)$ for all $m,n\geq 2$, as is easily verified, it follows that
 $L_e\mathbb{F}_3[S_3]\simeq \RHom^{\ast}_A(P,P)$ is quasi-isomorphic to the subalgebra generated by $z(2)$ and $z(3)$, which is a graded algebra with the claimed presentation.
\begin{rem}
	Some examples of computation of $H_*(GX^+_p,\mathbb{F}_p)$ for $X$ a classifying space of a finite group are presented in \cite[Section 13C]{BGS}, in particular
	when this finite group is $\mathbb{Z}/p\rtimes \mathbb{Z}/q$. The special case $p=3,q=2$ was not noticed in op. cit.
\end{rem} 
 
\end{example}


\begin{thebibliography}{99}
\bibitem{AKO} M. Aschbacher, R. Kessar and R. Oliver,  {\em Fusion systems in algebra and topology}, London Mathematical Society Lecture Note Series, 391. Cambridge University Press, Cambridge, 2011.	
%	
\bibitem{Ben} D. Benson,  {\em
	An algebraic model for chains on $\Omega BG^{\wedge}_p$}. 
Trans. Amer. Math. Soc. 361 (2009), no. 4, 2225–-2242. 
%	
\bibitem{BL}
C. Broto and R. Levi,
{\em Loop structures on homotopy fibers of self maps of spheres},
Amer. J. Math. 122 (2000), no. 3, 547–-580.	
%
\bibitem{BGS} D. Benson, J. P. C. Greenlees and S. Shamir, {\em Complete intersections and mod p cochains}, Algebr. Geom. Topol. 13 (2013), no. 1, 61-–114.
%
\bibitem{BK}
A. K. Bousfield and D. Kan, {\em Homotopy limits, completions and localizations}, Springer-Verlag, Berlin, 1972. Lecture Notes in Mathematics, Vol. 304.
%		
\bibitem{BCL} C. Braun,  J. Chuang and A. Lazarev, {\em Derived localisation of algebras and modules.} Adv. Math. 328 (2018), 555--622.
%
\bibitem{CoL} F. Cohen and R. Levi, {\em On the homotopy theory of p-completed classifying spaces}. Group representations: cohomology, group actions and topology (Seattle, WA, 1996), 157–-182, Proc. Sympos. Pure Math., 63, Amer. Math. Soc., Providence, RI, 1998.
%
\bibitem{CH}  L. W. Christensen and H. Holm, {\em The direct limit closure of perfect complexes}, Journ. of Pure and Appl. Algebra 219, no. 3, pp. 449--463, 2015.
%
\bibitem{Coh1} P. M. Cohn, {\em On the free product of associative rings.} Math. Z. 71, 1959, 380-–398.		
%
\bibitem{Coh} P.M. Cohn, {\em Skew fields. Theory of general division rings.} Encyclopedia of Mathematics and its Applications, 57. Cambridge University Press, Cambridge, 1995. 
%
\bibitem{Connell} I. G. Connell,  {\em On the group ring},
Canadian J. Math. 15 (1963), 650–-685.
%
\bibitem{DG} W. G. Dwyer and J. P. C. Greenlees,
{\em Complete modules and torsion modules},
Amer. J.Math. 124 (2002), no. 1, 199–-220.
%
\bibitem{GoJ} P. G. Goerss and J. F.  Jardine,  {\em Simplicial homotopy theory.} Progress in Mathematics, 174, Birkh\"auser Verlag, Basel, 1999.
%
%
\bibitem{Hol} J. V. S. Holstein,  {\em Morita cohomology}. Math. Proc. Cambridge Philos. Soc. 158 (2015), no. 1, 1--26.
%
\bibitem{CHL}  J. Chuang, J. Holstein and A. Lazarev, {\em Homotopy theory of monoids and derived localization.}, \texttt{arXiv:1810.00373 }.
%
\bibitem{Kel} B. Keller, {\em A remark on the generalized smashing conjecture}, Manuscripta Math. 84 (1994), 193--198.
%
\bibitem{Kra} H. Krause, {\em
Cohomological quotients and smashing localizations},
Amer. J. Math. 127 (2005), no. 6, 1191-–1246.
%
\bibitem{Mad} D. Madsen {\em Homological aspects in representation theory}, Norwegian Univ. Sci. Technol., Trondheim, 2002
%
\bibitem{Mur} F. Muro, {\em Homotopy units in A-infinity algebras.} Trans. Amer. Math. Soc. 368 (2016), no. 3, 2145--2184.
%
\bibitem{Nik} W. K. Nicholson,  \emph{Local group rings},
Canad. Math. Bull. 15 (1972), 137–-138.
%
\bibitem{Pauk} D. Pauksztello, {\em Homological epimorphisms of differential graded algebras.} Comm. Algebra 37 (2009), no. 7, 2337--2350.
%
\bibitem{Rap} G. Raptis,  {\em Some characterization of acyclic maps}. Homotopy Relat. Struct. (2019) vol. 14, issue 3,  773--785.
%
\bibitem{RWZ} M. Rivera, F. Wierstra, and M. Zeinalian, The functor of singular chains detects weak
homotopy equivalences, Proc. AMS, 147 (2019), 4987--4998.
\bibitem{Shipley} B. Shipley, \emph{
$H\mathbb{Z}$--algebra spectra are differential graded algebra}, 
Amer. J. Math. 129 (2007), no. 2, 351--379. 
    \end{thebibliography}
\end{document}